\DeclareMathAlphabet{\mathsf}{OT1}{\sfdefault}{m}{n}
\newcommand{\nocontentsline}[3]{}
\newcommand{\tocless}[2]{\bgroup\let\addcontentsline=\nocontentsline#1{#2}\egroup}
\renewcommand{\lambda}{\uplambda}%
\renewcommand{\zeta}{\upzeta}
\renewcommand{\xi}{\upxi}
\renewcommand{\tau}{\uptau}
\renewcommand{\alpha}{\upalpha}
\def\dual#1{\expandafter\dual@aux#1\@nil}
\def\dual@aux#1/#2\@nil{\begin{tabular}{@{}c@{}}#1\\#2\end{tabular}}
\DeclareMathAlphabet{\amathbb}{U}{bbold}{m}{n}
\newtheoremstyle{teoremas}
{11pt}
{11pt}
{\itshape}
{}
{\bfseries}
{}
{.5em}
{}
\theoremstyle{teoremas}
\newtheorem{theorem}{Theorem}[section]
\newtheorem{lemma}[theorem]{Lemma}
\newtheorem{proposition}[theorem]{Proposition}
\newtheoremstyle{definition}
{11pt}
{11pt}
{}
{}
{\bfseries}
{}
{.5em}
{}
\theoremstyle{definition}
\newtheorem{conjecture}[theorem]{Conjecture}
\newtheorem{example}[theorem]{Example}
\newtheorem{remark}[theorem]{Remark}
\crefname{theorem}{theorem}{theorems}
\Crefname{theorem}{Theorem}{Theorems}
\crefname{lemma}{lemma}{lemmas}
\Crefname{lemma}{Lemma}{Lemmas}
\crefname{proposition}{proposition}{propositions}
\Crefname{proposition}{Proposition}{Propositions}
\DeclareMathOperator{\rk}{rk}
\newcommand{\M}{\mathsf{M}}
\newcommand{\rev}{\operatorname{rev}}
\newcommand{\U}{\mathsf{U}}
\newcommand{\Z}{\mathbb{Z}}
\newcommand{\LL}{\mathsf{\Lambda}}
\newcommand{\C}{\mathsf{C}}
\newcommand{\rank}{\operatorname{rk}}
\newcommand{\cL}{\mathcal{L}}
\newcommand{\cH}{\mathcal{H}}
\newcommand{\PG}{\operatorname{PG}}
\newcommand{\qbinom}[2]{\begin{bmatrix} #1 \\ #2 \end{bmatrix}_q}
   \def\MR#1{}
\title[Inverse Kazhdan--Lusztig polynomials of matroids under deletion]{Inverse Kazhdan--Lusztig polynomials\\ of matroids under deletion}
\author[T.~Braden]
{Tom~Braden}
\address{(T.~Braden)
University of Massachusetts Amherst, Amherst, MA, USA
}
\email{braden@umass.edu}
\author[L.~Ferroni]{Luis~Ferroni}
\address{(L.~Ferroni)
  Dipartimento di Matematica, Universit\`a di Pisa, Pisa, Italy.
}
\email{luis.ferroni@unipi.it}
\thanks{During the early stages of this project, Luis Ferroni was a member at the Institute for Advanced Study, supported by the Minerva Research Foundation. Jacob Matherne and Nutan Nepal were supported by NSF Grant DMS-2452179 and Simons Foundation Travel Support for Mathematicians Award MPS-TSM00007970.}
\author[J.~P.~Matherne]{Jacob~P.~Matherne}
\address{(J. P. Matherne)
Department of Mathematics, North Carolina State University, Raleigh, NC, USA.
}
\email{jpmather@ncsu.edu}
\author[N.~Nepal]{Nutan Nepal}
\address{(N. Nepal)
Department of Mathematics, North Carolina State University, Raleigh, NC, USA.
}
\email{nnepal2@ncsu.edu}
\subjclass[2020]{Primary: 05B35}
\begin{document}

\begin{abstract}
We provide a deletion formula for the inverse Kazhdan--Lusztig polynomial and the inverse $Z$-polynomial of a matroid. Our formulas provide analogues to the deletion formulas of Braden--Vysogorets for Kazhdan--Lusztig and $Z$-polynomials.  We discuss several consequences, which include closed formulas and recursions for these invariants on uniform matroids, projective geometries, glued cycles, and arbitrary matroids of corank $2$. As a relevant application of our deletion formula, we show the existence of a matroid of rank $19$ which disproves a conjecture of Xie and Zhang concerning a real-rootedness property of inverse Kazhdan--Lusztig polynomials.
\end{abstract}

\maketitle

\section{Introduction}

The Kazhdan--Lusztig polynomial $P_{\M}(x)$ is a fundamental invariant associated with any matroid $\M$. It was introduced by Elias, Proudfoot, and Wakefield in \cite{elias-proudfoot-wakefield}, and exhibits formal similarities to the Kazhdan--Lusztig polynomials defined for Bruhat intervals of Coxeter groups. The coefficients of $P_{\M}(x)$ depend only on the lattice of flats $\mathcal{L}(\M)$ of the matroid and, in fact, they can be expressed as signed integer combinations of the flag Whitney numbers counting chains of flats with specified ranks (see \cite{proudfoot-xu-young}).

The Kazhdan--Lusztig polynomial of $\M$ plays a significant role in the \emph{singular} Hodge theory of matroids developed by Braden, Huh, Matherne, Proudfoot, and Wang in \cite{braden-huh-matherne-proudfoot-wang}. Two further polynomial invariants that are often studied in this context are the \emph{inverse Kazhdan--Lusztig polynomial of a matroid}, introduced by Gao and Xie \cite{gao-xie} and denoted $Q_{\M}(x)$, and the $Z$-polynomial introduced by Proudfoot, Xu, and Young \cite{proudfoot-xu-young} and denoted $Z_{\M}(x)$. One of the main contributions in \cite{braden-huh-matherne-proudfoot-wang} consists of interpreting these three polynomials in the following way:
\begin{itemize}
    \item $P_{\M}(x)$ is the Hilbert series of the stalk of the intersection cohomology module of $\M$ at the empty set \cite[Theorem 1.9]{braden-huh-matherne-proudfoot-wang}.
    \item $Z_{\M}(x)$ is the Hilbert series of the intersection cohomology module of $\M$ \cite[Theorem 1.9]{braden-huh-matherne-proudfoot-wang}.
    \item The coefficient of $x^k$ in $Q_{\M}(x)$ is the multiplicity of the trivial module in the degree $\rk(\M) - 2k$ piece of the Rouquier complex of $\M$ \cite[Proposition 8.21]{braden-huh-matherne-proudfoot-wang}.
\end{itemize}

The above interpretations imply that these three polynomials have non-negative coefficients for every matroid $\M$. However, computing the above polynomials is often a daunting task even for small matroids.

Very recently, Gao, Ruan, and Xie \cite{gao-ruan-xie} studied a fourth polynomial, called the \emph{inverse $Z$-polynomial}, denoted by $Y_{\M}(x)$. While the nonnegativity of the coefficients of $Y_{\M}(x)$ follows from the nonnegativity for $Q_{\M}(x)$, at the moment we lack an interpretation for the coefficients of $Y_{\M}(x)$ as graded dimensions (see \cite[Problem~1]{gao-ruan-xie}).

In \cite{braden-vysogorets}, Braden and Vysogorets presented a formula that relates the Kazhdan--Lusztig polynomial
of a matroid $\M$ to that of the matroid $\M\smallsetminus i$ obtained by deleting an element $i$, as well as various restrictions and contractions of $\M$. Specifically, for a simple matroid $\M$ where $i$ is not a coloop, their main result is the following.

\begin{theorem}[{\cite[Theorem~2.8]{braden-vysogorets}}]\label{thm:bv}
    Let $\M$ be a loopless matroid of rank $k$, and let $i\in E$ be an element of the ground set that is not a coloop. Then,
    \begin{align*}
    P_{\M}(x) &= P_{\M\smallsetminus\{i\}}(x) - xP_{\M/\{i\}}(x) + \sum_{F\in\mathscr{S}_i} \tau\left(\M/{(F\cup\{i\})}\right)\, x^{\frac{k - \rk(F)}{2}}\, P_{\M|_F}(x),\\
     Z_{\M}(x) &= Z_{\M\smallsetminus\{i\}}(x) + \sum_{F\in\mathscr{S}_i} \tau\left(\M/{(F\cup\{i\})}\right)\, x^{\frac{k - \rk(F)}{2}}\, Z_{\M|_F}(x).
    \end{align*}
\end{theorem}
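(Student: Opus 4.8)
The plan is to establish the two identities together, by induction on the rank $k=\rk(\M)$, with the Kazhdan--Lusztig identity doing the work at each step and the $Z$-identity following from it. We may assume that $i$ is not parallel to any other element of $\M$: otherwise $\M\smallsetminus\{i\}$ has the same lattice of flats as $\M$, $\M/\{i\}$ has a loop, $\mathscr{S}_i=\varnothing$, and both identities are trivial.

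The engine is an unconditional identity obtained from the defining expansion $Z_{\M}(x)=\sum_{F\in\mathcal{L}(\M)}x^{\rk F}P_{\M/F}(x)$ by grouping flats along the map $q\colon\mathcal{L}(\M)\to\mathcal{L}(\M\smallsetminus\{i\})$, $F\mapsto F\smallsetminus\{i\}$. For $G\in\mathcal{L}(\M\smallsetminus\{i\})$ exactly one of three things happens: (a) $G$ and $G\cup\{i\}$ are both flats of $\M$, which is to say $G\in\mathscr{S}_i$, and then $i$ is a coloop of $\M|_{G\cup\{i\}}$; (b) $G$ is a flat of $\M$ but $G\cup\{i\}$ is not, in which case $i$ is a non-loop element of $\M/G$ parallel to some other element; or (c) $G$ is not a flat of $\M$, in which case $\cl_{\M}(G)=G\cup\{i\}$ and $i$ is a loop of $\M/G$. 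Subtracting the analogous expansion of $Z_{\M\smallsetminus\{i\}}$, the type-(b) fibers cancel because deleting a parallel element does not change $P$, so $P_{\M/G}=P_{(\M/G)\smallsetminus\{i\}}=P_{(\M\smallsetminus\{i\})/G}$; the type-(c) fibers cancel because $(\M\smallsetminus\{i\})/G=(\M/G)\smallsetminus\{i\}=(\M/G)/\{i\}=\M/(G\cup\{i\})$; and what is left is
\[
Z_{\M}(x)-Z_{\M\smallsetminus\{i\}}(x)=\sum_{G\in\mathscr{S}_i}x^{\rk G}\Bigl(P_{\M/G}(x)-P_{(\M/G)\smallsetminus\{i\}}(x)+xP_{(\M/G)/\{i\}}(x)\Bigr).
\]
We also record the order-theoretic fact that $\mathscr{S}_i$ is closed downward among flats: if $F\in\mathscr{S}_i$ and $G\leq F$ is a flat, then $G\in\mathscr{S}_i$ (an element $j$ of $\cl_{\M}(G\cup\{i\})\smallsetminus(G\cup\{i\})$ would lie in $\cl_{\M}(F\cup\{i\})=F\cup\{i\}$, hence in $F$, forcing $i\in\cl_{\M}(G\cup\{j\})\subseteq F$, a contradiction).

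In the inductive step we apply the Kazhdan--Lusztig identity for $\M/G$ --- which has rank $<k$ when $G\neq\varnothing$ --- to each summand of the displayed identity. Translating flats of $\M/G$ to flats of $\M$ above $G$ (using $\mathscr{S}_i(\M/G)=\{F\in\mathscr{S}_i:F\geq G\}$, $(\M/G)/(F\cup\{i\})=\M/(F\cup\{i\})$, $(\M/G)|_{F}=(\M|_F)/G$, and $(k-\rk G)-(\rk F-\rk G)=k-\rk F$), then interchanging the two sums and using downward closure of $\mathscr{S}_i$ to identify $\sum_{\varnothing\neq G\in\mathscr{S}_i,\,G\leq F}x^{\rk G}P_{(\M|_F)/G}(x)=Z_{\M|_F}(x)-P_{\M|_F}(x)$, one arrives at
\[
Z_{\M}(x)-Z_{\M\smallsetminus\{i\}}(x)=\bigl(P_{\M}(x)-P_{\M\smallsetminus\{i\}}(x)+xP_{\M/\{i\}}(x)\bigr)+\sum_{F\in\mathscr{S}_i}\tau\!\left(\M/(F\cup\{i\})\right)x^{\frac{k-\rk F}{2}}\bigl(Z_{\M|_F}(x)-P_{\M|_F}(x)\bigr),
\]
the parenthesized term being the $G=\varnothing$ contribution. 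If $\Delta_Z$ denotes the difference of the two sides of the claimed $Z$-identity and $\Delta_P$ the difference of the two sides of the claimed Kazhdan--Lusztig identity, this last formula says exactly that $\Delta_Z=\Delta_P$.

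It remains to see that $\Delta_Z=\Delta_P=0$. First, $\Delta_Z$ is palindromic of degree $k$: the $Z$-polynomials of $\M$ and $\M\smallsetminus\{i\}$ are palindromic of degree $k$ ($i$ being a non-coloop), and each nonvanishing summand $\tau(\M/(F\cup\{i\}))\,x^{(k-\rk F)/2}Z_{\M|_F}(x)$ is palindromic of degree $k$, since $Z_{\M|_F}$ is palindromic of degree $\rk F$ and $k-\rk F$ is then a positive even integer (when $k-\rk F$ is odd the coefficient $\tau$ vanishes). Second, $\Delta_P$ has degree $<k/2$: among the terms of $\Delta_P$, the only one that can reach degree $k/2$ is $xP_{\M/\{i\}}(x)$, whose coefficient there is $\tau(\M/\{i\})$, and this is cancelled by the $F=\varnothing$ summand $\tau(\M/\{i\})\,x^{k/2}$, while $P_{\M}$, $P_{\M\smallsetminus\{i\}}$, and all remaining summands have degree $<k/2$. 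A polynomial that is palindromic of degree $k$ and of degree $<k/2$ vanishes, so $\Delta_Z=\Delta_P=0$; the base case (rank at most $2$) is an explicit check. I expect the main obstacle to be the first part of the argument --- getting the three-way classification of the fibers of $q$ precisely right and verifying that two of the three types drop out, which is exactly what identifies $\mathscr{S}_i$ as the right family of flats. The subsequent re-summation in the inductive step is routine once the flat correspondences are set up, modulo careful treatment of the parallel-element and small-rank cases.
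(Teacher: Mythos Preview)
Your argument is correct, and it is genuinely different from the proof in \cite{braden-vysogorets} (which this paper cites but does not reproduce). Braden and Vysogorets work in the free $\mathbb{Z}[x,x^{-1}]$-module $\mathcal{H}(\M)$ with its Kazhdan--Lusztig basis $\{\zeta_{\M}^F\}$: they define the deletion homomorphism $\Delta\colon\mathcal{H}(\M)\to\mathcal{H}(\M\smallsetminus i)$, compute $\Delta(\zeta_{\M}^E)$ in the Kazhdan--Lusztig basis of $\mathcal{H}(\M\smallsetminus i)$ (this is essentially the content of Lemma~\ref{lem:delta_zeta_nocoloop}(i), which the paper quotes from \cite{braden-vysogorets}), and then extract the formulas by comparing coefficients. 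That machinery is what the present paper reuses to obtain the $Q$- and $Y$-analogues in Theorem~\ref{thm:deletion_formula_Q}.

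Your proof, by contrast, uses nothing beyond the defining recursion in Theorem~\ref{thm:definition-kl-and-zeta}: you subtract the two $Z$-expansions along the fiber map $\mathcal{L}(\M)\to\mathcal{L}(\M\smallsetminus i)$, observe that only the $\mathscr{S}_i$-fibers survive, plug in the inductive hypothesis for $\M/G$, re-sum using the downward closure of $\mathscr{S}_i$, and then kill $\Delta_Z=\Delta_P$ by the standard ``palindromic of degree $k$ and of degree $<k/2$ implies zero'' trick. The cancellation of the $x^{k/2}$ term between $xP_{\M/\{i\}}$ and the $F=\varnothing$ summand is exactly right, and your observation that $G\in\mathscr{S}_i$ forces $\{i\}$ to be a flat of $\M/G$ (so $i$ is never parallel in the contractions you induct on) cleanly handles the simplicity issue. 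This route is more elementary and self-contained; the module-theoretic route is more structural and is what allows the paper to obtain the inverse formulas uniformly by changing which basis element is fed into $\Delta$.
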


\noindent In the above statement, the notation ${\mathscr{S}}_i$ stands for a distinguished set of flats:
\begin{equation}\label{eq:S_i}
    {\mathscr{S}}_i = {\mathscr{S}}_i(\M) = \left\{ F \in \mathcal{L}(\M): F\subsetneq E\smallsetminus\{i\} \text{ and } F\cup\{i\}\in \mathcal{L}(\M)\right\},
\end{equation}
while 
\[ \tau(\M) := 
    \begin{cases}
        [x^{\frac{\rk(\M)-1}{2}}] P_{\M}(x) & \text{if $\rk(\M)$ is odd,}\\
        0 & \text{if $\rk(\M)$ is even.}
    \end{cases}
\]

\noindent In addition to the intrinsic interest in computing these polynomials, the community has shown considerable excitement about when they might exhibit log-concavity and real-rootedness properties. 

\begin{conjecture}\label{conj:four-conjectures}
    For every matroid $\M$, the following properties hold:
    \begin{enumerate}[(a)]
        \item \label{it:conj-p} \cite[Conjecture~3.2]{gedeon-proudfoot-young-survey} The polynomial $P_{\M}(x)$ is real-rooted.
        \item \label{it:conj-z}\cite[Conjecture~5.1]{proudfoot-xu-young} The polynomial $Z_{\M}(x)$ is real-rooted.
        \item \label{it:conj-q}\cite[Conjecture~4.2]{gao-xie} The polynomial $Q_{\M}(x)$ has log-concave coefficients.
        \item \label{it:conj-y}\cite[Conjecture~1.5]{gao-ruan-xie} The polynomial $Y_{\M}(x)$ has log-concave coefficients.
    \end{enumerate}
\end{conjecture}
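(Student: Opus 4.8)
Since the final statement is a conjecture, I cannot hope to prove it outright; the realistic plan — and, I expect, the route the paper follows — is to reduce each of the four invariants along a single deletion and then squeeze the resulting recursion on the families where it becomes explicit. First I would establish a deletion formula for $Q_{\M}(x)$ and $Y_{\M}(x)$ parallel to \Cref{thm:bv}: for loopless $\M$ of rank $k$ and a non-coloop $i\in E$, an identity expressing $Q_{\M}$ through $Q_{\M\smallsetminus\{i\}}$, $Q_{\M/\{i\}}$, and a sum over $F\in\mathscr{S}_i$ of correction terms built from $\tau$-type coefficients, powers of $x$, and inverse Kazhdan--Lusztig polynomials of the minors $\M|_F$ and $\M/(F\cup\{i\})$ — and similarly for $Y$ in place of $Z$. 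I would try two routes. The module-theoretic one uses \cite{braden-huh-matherne-proudfoot-wang}: since the coefficients of $Q_{\M}$ record trivial-module multiplicities in the Rouquier complex, the deletion exact triangles relating the intersection cohomology of $\M$ and of $\M\smallsetminus\{i\}$ should, after dualizing, yield the recursion. The purely combinatorial route inverts the $P$- and $Z$-deletion formulas of \Cref{thm:bv} against the defining convolution identity of the inverse Kazhdan--Lusztig and inverse $Z$-polynomials; this is bookkeeping with M\"obius-type sums over $\mathcal{L}(\M)$ and should go through cleanly.

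With the deletion formula in hand, I would specialize. For uniform matroids $U_{r,n}$ the lattice of flats is transparent and $\mathscr{S}_i$ is easy to describe, so the formula should collapse to a clean recursion in $(r,n)$, plausibly with a hypergeometric-type closed form; likewise for projective geometries $\PG(r,q)$ and for glued cycles, and for corank-$2$ matroids the lattice of flats is a truncated partition lattice so every term can be made completely explicit. Then I would prove the positivity statements on these families: real-rootedness of $P_{\M}$ and $Z_{\M}$ via interlacing arguments (exhibiting the recursion as a nonnegative combination of polynomials sharing a common interlacer, so that the theory of compatible sequences of polynomials applies), and log-concavity of $Q_{\M}$ and $Y_{\M}$ either from Newton's inequalities applied to the explicit coefficients or by deducing it from real-rootedness whenever the stronger property happens to survive.

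The hard part will be that the deletion formula is not positivity-preserving in any evident way, which is exactly why this method cannot settle \Cref{conj:four-conjectures} in general. It writes $Q_{\M}$ as a sum of several polynomials with nonnegative coefficients, but a sum of log-concave (resp.\ real-rooted) polynomials need not be log-concave (resp.\ real-rooted) unless one controls the interlacing pattern among the summands, and here the flats $F\in\mathscr{S}_i$ contribute terms with unrelated degrees $\tfrac{k-\rk(F)}{2}$ and unrelated root data, so no common interlacer is visible a priori. Thus the recursion can only (i) verify \Cref{conj:four-conjectures} on those families where the combinatorics forces the needed compatibility, and (ii) conversely be run in reverse to hunt for a matroid where a naive strengthening fails — which is precisely how one would locate the rank-$19$ matroid that breaks the Xie--Zhang real-rootedness refinement of part~\ref{it:conj-q}. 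I therefore expect the outcome to be: \Cref{conj:four-conjectures} confirmed for uniform matroids, projective geometries, glued cycles, and corank-$2$ matroids, the general case left open, and the same machinery used to mark the boundary of what real-rootedness can support.
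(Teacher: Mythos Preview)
The statement is an open conjecture, and the paper does not prove it --- not in general, and not for any of the special families you list. Your proposal conflates what the paper actually does with what you hope it does. The paper's program is narrower: it proves the deletion formula for $Q_{\M}$ and $Y_{\M}$ (\Cref{thm:deletion_formula_Q}), uses it to write down closed-form or recursive expressions for these polynomials on uniform matroids, glued cycles, single-element deletions of projective geometries, and corank-$2$ matroids, and then disproves \Cref{conj:xie-zhang} (the Xie--Zhang real-rootedness refinement) by an explicit corank-$2$ example. At no point does it attempt interlacing arguments or verify real-rootedness of $P$ or $Z$, and the only contact with \Cref{conj:four-conjectures} itself is a computer check of parts~\ref{it:conj-q} and~\ref{it:conj-y} for corank-$2$ matroids of size at most $35$, reported in a remark.

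Your guessed shape of the deletion formula is also off, and this matters. The formula for $Q_{\M}$ is not built over $\mathscr{S}_i$ with restrictions $\M|_F$; it is built over the dual-looking set $\mathscr{T}_i = \{F\in\mathcal{L}(\M): i\in F,\ F\smallsetminus\{i\}\notin\mathcal{L}(\M)\}$ with \emph{contractions} $Q_{\M/F}$, a $(1+x)\,Q_{\M/i}$ term, and a genuine minus sign in front of the sum. Neither of the two derivations you propose (dualizing Rouquier-complex exact triangles, or inverting the Braden--Vysogorets identities against the defining convolutions) is what the paper does. Instead it works inside the free module $\mathcal{H}(\M)$, expresses the standard basis in terms of the Kazhdan--Lusztig basis via $\widehat{Q}$ (Lemma~\ref{lem:standard_basis_in_zeta}), computes how the deletion map $\Delta$ acts on the KL basis elements (Lemma~\ref{lem:delta_zeta_nocoloop}), and then reads off the coefficient of $\zeta_{\M\smallsetminus i}^{\varnothing}$ on both sides of $\Delta(\Gamma_{\M}^E)=\Gamma_{\M\smallsetminus i}^{E\smallsetminus i}$. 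The counterexample to \Cref{conj:xie-zhang} is not found by ``running the recursion in reverse'' but by combining valuativity of $Q$ (Ardila--Sanchez), the explicit glued-cycle formula of Proposition~\ref{prop:glued-cycles}, and the Ferroni--Schr\"oter machinery for elementary split matroids to get a closed expression for all corank-$2$ matroids, then scanning.
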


For \eqref{it:conj-p}, \eqref{it:conj-q}, and \eqref{it:conj-y}, the existing evidence consists mostly of explicit computations for special families of matroids, e.g., wheels and whirls \cite{wheels-whirls}, uniform matroids \cite{xie-zhang-uniform,gao-li-xie-yang-zhang}, thagomizer matroids \cite{gedeon-thagomizer,wu-zhang}, and sparse paving matroids \cite{ferroni-vecchi,ferroni-nasr-vecchi,gao-ruan-xie}.

For \eqref{it:conj-z} there is a general partial result, proved by Ferroni, Matherne, Stevens, and Vecchi in \cite[Theorem~4.7]{ferroni-matherne-stevens-vecchi}: this asserts that $Z$-polynomials of matroids are $\gamma$-positive, a result that was conjectured in \cite{ferroni-nasr-vecchi}. This is weaker than real-rootedness, but stronger than unimodality\footnote{The unimodality of $Z$-polynomials follows from the Hard Lefschetz theorem on the intersection cohomology module, proved by Braden, Huh, Matherne, Proudfoot, and Wang, see \cite[Theorem~1.2]{braden-huh-matherne-proudfoot-wang}}. We emphasize that a key role in that proof is played by the deletion formula for $Z$-polynomials appearing in Theorem~\ref{thm:bv}.

A refinement of Conjecture~\ref{conj:four-conjectures}\eqref{it:conj-q} was recently proposed by Xie and Zhang \cite{xie-zhang-conjecture}. In order to state their conjecture we introduce some terminology. If $Q_{\M}(x) = q_0 + q_1x + \cdots + q_s x^s$ where $s = \deg Q_{\M} \leq \lfloor \frac{\rk(\M)-1}{2}\rfloor$, then we define the \emph{normalization} of $Q_{\M}(x)$ as the polynomial
    \[ \mathcal{B}(Q_{\M})(x) = \binom{s}{0}q_0 + \binom{s}{1}q_1\,x + \cdots + \binom{s}{s} q_s\,x^s.\]
That is, the polynomial $\mathcal{B}(Q_\M)(x)$ is the Hadamard product of $Q_{\M}(x)$ and $(1+x)^s$. 

\begin{conjecture}[{\cite[Conjecture~1.4]{xie-zhang-conjecture}}]\label{conj:xie-zhang}
    For every matroid $\M$, the polynomial $\mathcal{B}(Q_{\M})(x)$ is real-rooted.
\end{conjecture}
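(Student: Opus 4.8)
The statement displayed above is a \emph{conjecture} of Xie and Zhang, and — as the abstract of this paper already announces — it is in fact \emph{false}; so rather than a proof I will outline how I would go about \emph{disproving} it. The engine is the deletion formula for $Q_{\M}(x)$ proved in this paper, the inverse analogue of Theorem~\ref{thm:bv}, which expresses $Q_{\M}(x)$ in terms of $Q_{\M\smallsetminus\{i\}}(x)$, $Q_{\M/\{i\}}(x)$, and a correction term summed over the flats in $\mathscr{S}_i$. This recursion is what makes it feasible to compute $Q_{\M}(x)$ \emph{exactly}, with integer arithmetic, for matroids of moderately large rank inside structured families — something brute-force enumeration cannot do at the rank where real-rootedness is expected to break.

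First I would pick a family of matroids whose lattices of flats are highly symmetric, so that for a well-chosen non-coloop $i$ the correction sum over $\mathscr{S}_i$ collapses to a handful of orbit representatives: projective geometries $\PG(r,q)$, uniform matroids, and the ``glued cycles'' mentioned in the abstract are the natural candidates, and indeed this paper derives closed forms and fast recursions for $Q_{\M}(x)$ on exactly these families. The point of working inside such a family is that failure of real-rootedness of a normalized sequence typically appears only ``deep'' in the family, so one needs control of $Q_{\M}(x)$ up to rank roughly $20$, which the deletion recursion — but not direct computation — delivers.

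Second, for each resulting polynomial $Q_{\M}(x) = q_0 + q_1 x + \cdots + q_s x^s$ I would form $\mathcal{B}(Q_{\M})(x) = \sum_{j=0}^{s}\binom{s}{j} q_j\,x^j$ and test real-rootedness. One should note here that if $Q_{\M}(x)$ is log-concave — as Conjecture~\ref{conj:four-conjectures}\eqref{it:conj-q} predicts — then $\mathcal{B}(Q_{\M})(x)$ is \emph{automatically} log-concave, since $\binom{s}{j}^2 \geq \binom{s}{j-1}\binom{s}{j+1}$ and the $q_j$ are nonnegative; hence any counterexample to Conjecture~\ref{conj:xie-zhang} must violate a genuine Newton inequality
\[
b_j^2 \;<\; \frac{(j+1)(s-j+1)}{j\,(s-j)}\; b_{j-1}\,b_{j+1}, \qquad b_j := \binom{s}{j}\,q_j,
\]
rather than mere log-concavity. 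In practice I would first scan the chosen family numerically, increasing the rank until a complex-conjugate pair of roots of $\mathcal{B}(Q_{\M})(x)$ appears; this paper reports that this first happens at rank $19$.

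Finally, to turn the numerical observation into a rigorous counterexample I would record the explicit rank-$19$ matroid $\M$ together with the exact integer coefficients $q_j$ of $Q_{\M}(x)$ — provably correct, since they are produced by the deletion formula — and then certify over $\Q$ that $\mathcal{B}(Q_{\M})(x)$ is not real-rooted, either by exhibiting an index $j$ at which the displayed Newton inequality fails, or, more robustly, by running a Sturm sequence to count the distinct real roots of $\mathcal{B}(Q_{\M})(x)$ and verifying that this count is strictly smaller than its degree. The main obstacle is the first step: choosing the family and the element $i$ so that the $\mathscr{S}_i$-sum, and hence the growth of $Q_{\M}(x)$, stays under explicit control all the way up to the rank where real-rootedness breaks — everything past that point is bookkeeping together with a finite, exact-arithmetic certificate.
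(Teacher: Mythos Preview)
Your broad strategy is right in spirit, but the concrete plan has a real gap: the families you propose to search cannot yield the counterexample. Uniform matroids are paving, and Xie--Zhang already proved their conjecture for all paving matroids; projective geometries have $Q_{\M}(x)=q^{\binom{r}{2}}$ constant, so $\mathcal{B}(Q_{\M})$ is trivially real-rooted. The paper's counterexample is instead a \emph{corank~$2$} matroid on $21$ elements (the dual of the loopless rank~$2$ matroid whose atoms have sizes $(4,4,4,3,3,3)$), and the deletion formula alone is not what makes this class tractable. The missing ingredient is \emph{valuativity}: the paper combines (i) the deletion formula, to obtain a closed form for $Q$ of glued cycles $\C_{a,b}$ (Proposition~\ref{prop:glued-cycles}); (ii) the Ardila--Sanchez theorem that $Q$ is a valuative invariant; and (iii) the Ferroni--Schr\"oter decomposition for elementary split matroids. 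Together these produce an explicit formula (Theorem~\ref{thm:crank2}) for $Q_{\M}(x)$ of \emph{every} corank~$2$ matroid in terms of uniform and glued-cycle data, after which a scan over partitions of $\{1,\dots,n\}$ becomes feasible and locates the counterexample at $n=21$.

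Two smaller corrections. The deletion formula for $Q_{\M}$ runs over the set $\mathscr{T}_i$ of equation~\eqref{eq:T_i}, not $\mathscr{S}_i$: the relevant flats are those \emph{containing} $i$ with $F\smallsetminus\{i\}\notin\mathcal{L}(\M)$, and the roles of restriction and contraction are swapped relative to Theorem~\ref{thm:bv}. And your assertion that a counterexample ``must violate a genuine Newton inequality'' is false: Newton's inequalities are necessary but not sufficient for real-rootedness, so $\mathcal{B}(Q_{\M})$ can satisfy all of them and still have a pair of non-real roots. The paper's certificate is simply the explicit integer polynomial together with a verifiable pair of complex-conjugate zeros; no Newton inequality is invoked.
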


The last conjecture would imply, via the Newton inequalities on real-rooted polynomials, the ultra log-concavity of the coefficients of $\mathcal{B}(Q_{\M})(x)$. This, in turn, is equivalent to the log-concavity of the coefficients of $Q_{\M}(x)$ predicted by Conjecture~\ref{conj:four-conjectures}\eqref{it:conj-q}. To support their own conjecture, Xie and Zhang proved that it holds for all paving matroids, a class of matroids that is conjecturally predominant  \cite[Conjecture~1.6]{mayhew-etal}. 

\subsection{Main results}

Motivated by Theorem~\ref{thm:bv}, we were led to consider a deletion formula for both the inverse Kazhdan--Lusztig polynomials $Q_{\M}(x)$ and the inverse $Z$-polynomials $Y_{\M}(x)$. Even though each of these polynomials, up to a sign, arise by inverting, respectively, $P_{\M}(x)$ and $Z_{\M}(x)$ in the incidence algebra of the lattice of flats $\mathcal{L}(\M)$ (see Section~\ref{sec:recap-kl} for more details), it is not at all clear how to deduce the desired deletion formulas from Theorem~\ref{thm:bv}. 

In order to state the deletion formulas for $Q_{\M}(x)$ and $Y_{\M}(x)$ we need to introduce a counterpart for the sets $\mathscr{S}_i$ appearing in equation~\eqref{eq:S_i}. We define
    \begin{equation}\label{eq:T_i}
        \mathscr{T}_i = \mathscr{T}_i(\M) := \left\{ F \in \mathcal{L}(\M): i\in F \text{ and } F\smallsetminus\{i\}\notin \mathcal{L}(\M)\right\}.
    \end{equation}

\begin{theorem}\label{thm:deletion_formula_Q}
    Let $\M$ be a matroid on $E$, and let $i\in E$ be an element that is not a coloop. Then, the following identities hold:
    \begin{align*} 
    Q_{\M}(x) &= Q_{\M\smallsetminus i}(x) + (1+x) Q_{\M/i}(x) - \sum_{F\in \mathcal{T}_i} \tau(\M|_F/i)\, x^{\rk(F)/2}\, Q_{\M/F}(x),\\
    Y_{\M}(x) &= Y_{\M\smallsetminus i}(x) + (1+x) Y_{\M/i}(x) - \sum_{F\in \mathcal{T}_i} \tau(\M|_F/i)\, x^{\rk(F)/2}\, Y_{\M/F}(x).
    \end{align*}
\end{theorem}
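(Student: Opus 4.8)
The plan is to prove the $Q$-identity by induction on $|E|$, starting from the description of $Q_\M$ as (up to sign) the inverse of $P_\M$ in the incidence algebra of $\mathcal{L}(\M)$ recalled in Section~\ref{sec:recap-kl}; for $\rk(\M)>0$ this is the recursion
\[
Q_\M(x)=\sum_{\emptyset\ne F\in\mathcal{L}(\M)}(-1)^{\rk(F)+1}P_{\M|_F}(x)\,Q_{\M/F}(x),
\]
and likewise $Y_\M(x)=\sum_{\emptyset\ne F\in\mathcal{L}(\M)}(-1)^{\rk(F)+1}Z_{\M|_F}(x)\,Y_{\M/F}(x)$. Throughout, $\M$ is loopless with $\{i\}\in\mathcal{L}(\M)$, as in Theorem~\ref{thm:bv}. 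The task is to substitute Theorem~\ref{thm:bv} and the inductive hypothesis into the right-hand side of the claimed formula and check that it collapses to this recursion.

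I would first set up the Braden--Vysogorets bookkeeping: the map $F\mapsto F\smallsetminus\{i\}$ is an order-preserving surjection $\mathcal{L}(\M)\twoheadrightarrow\mathcal{L}(\M\smallsetminus i)$, partitioning $\mathcal{L}(\M)$ into block $A$ of flats avoiding $i$, block $B=\{F'\cup\{i\}:F'\in\mathscr{S}_i(\M)\}$, and block $C=\mathcal{T}_i$. On $B$, $i$ is a coloop of $\M|_{F'\cup\{i\}}$, so $P_{\M|_{F'\cup\{i\}}}=P_{\M|_{F'}}$, $\rk(F'\cup\{i\})=\rk(F')+1$, and $\M/(F'\cup\{i\})=(\M/i)/F'$; on $C$, $i$ is not a coloop of $\M|_F$ (so Theorem~\ref{thm:bv} applies to $(\M|_F,i)$), $\rk_\M(F)=\rk_{\M\smallsetminus i}(F\smallsetminus\{i\})$, and $\M/F=(\M\smallsetminus i)/(F\smallsetminus\{i\})$; moreover $\mathcal{L}(\M/i)=\mathscr{S}_i(\M)\sqcup\{F\smallsetminus\{i\}:F\in\mathcal{T}_i\}$, with $(\M/i)|_{F'}=\M|_{F'}$ for $F'\in\mathscr{S}_i$ and $(\M|_F)/i=(\M/i)|_{F\smallsetminus\{i\}}$ for $F\in\mathcal{T}_i$. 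Feeding the last batch into the recursion for $Q_{\M/i}$ gives the auxiliary identity
\[
Q_{\M/i}(x)=\sum_{\emptyset\ne F'\in\mathscr{S}_i(\M)}(-1)^{\rk(F')+1}P_{\M|_{F'}}(x)\,Q_{\M/(F'\cup\{i\})}(x)+\Sigma_{\mathcal{T}}(x),\qquad \Sigma_{\mathcal{T}}(x):=\sum_{F\in\mathcal{T}_i}(-1)^{\rk(F)}P_{\M|_F/i}(x)\,Q_{\M/F}(x).
\]
One more observation makes the induction close: for $F\in A\smallsetminus\mathscr{S}_i$, the element $i$ is parallel in $\M/F$ to some element of $\cl_\M(F\cup\{i\})\smallsetminus(F\cup\{i\})$, hence $Q_{\M/F}=Q_{(\M/F)\smallsetminus i}$ and such $F$ drop out when a deletion formula is applied to $\M/F$; for the remaining $F\in\mathscr{S}_i$ one has $\{i\}\in\mathcal{L}(\M/F)$, so the inductive hypothesis does apply to $(\M/F,i)$.

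Now run the computation: split the recursion for $Q_\M$ over $A\sqcup B\sqcup C$, in $A$ writing $Q_{\M/F}=Q_{(\M/F)\smallsetminus i}+(Q_{\M/F}-Q_{(\M/F)\smallsetminus i})$, in $B$ using the coloop identity, in $C$ expanding $P_{\M|_F}$ by Theorem~\ref{thm:bv}. Via the surjection, the $Q_{(\M/F)\smallsetminus i}$-terms from $A$ and the $P_{(\M|_F)\smallsetminus i}$-terms from $C$ reassemble into the recursion for $Q_{\M\smallsetminus i}$, producing $Q_{\M\smallsetminus i}(x)$. Using the auxiliary identity, block $B$ collapses to $\Sigma_{\mathcal{T}}$ and the $-xP_{\M|_F/i}$-terms from $C$ give $x\Sigma_{\mathcal{T}}$; by the $A\smallsetminus\mathscr{S}_i$ observation, the residual $A$-terms reduce to $\sum_{\emptyset\ne F'\in\mathscr{S}_i}(-1)^{\rk(F')+1}P_{\M|_{F'}}(Q_{\M/F'}-Q_{(\M/F')\smallsetminus i})$, and expanding each bracket by the inductive hypothesis for $(\M/F',i)$ yields the $(1+x)$-multiple of $\sum_{\emptyset\ne F'\in\mathscr{S}_i}(-1)^{\rk(F')+1}P_{\M|_{F'}}Q_{\M/(F'\cup\{i\})}$ plus a double sum of $\tau$-terms. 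Together with $\Sigma_{\mathcal{T}}+x\Sigma_{\mathcal{T}}$ and again the auxiliary identity, the $(1+x)$-multiple assembles into $(1+x)Q_{\M/i}(x)$. What is left is that double sum of $\tau$-terms together with the $\tau$-summands of the Braden--Vysogorets expansions in $C$; reindexed by pairs $(F',G)$ with $G\in\mathcal{T}_i$, $F'\in\mathscr{S}_i$, $F'\subseteq G$, the two kinds of contribution cancel in pairs because $\tau(\M|_G/(F'\cup\{i\}))\ne 0$ forces $\rk(G)\equiv\rk(F')\pmod 2$, flipping a sign --- except for the $F'=\emptyset$ summands coming from $C$, which are exactly $-\sum_{F\in\mathcal{T}_i}\tau(\M|_F/i)\,x^{\rk(F)/2}Q_{\M/F}(x)$. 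This completes the identity, and the degree bound $\deg Q_\M\le\lfloor(\rk(\M)-1)/2\rfloor$ is a consistency check: the one overshooting term, the top term of $(1+x)Q_{\M/i}$, is killed in even rank by the $F=E$ summand of the $\mathcal{T}_i$-sum (using $E\in\mathcal{T}_i$ and that $P$ and $Q$ have equal top coefficients in odd rank).

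The $Y$-identity follows by the identical argument with $(P_\bullet,Q_\bullet)$ replaced by $(Z_\bullet,Y_\bullet)$ and the Kazhdan--Lusztig case of Theorem~\ref{thm:bv} by its $Z$-polynomial case: $\tau$ is still the same Kazhdan--Lusztig quantity, so the terminal $\tau$-cancellation is verbatim, and the lone structural difference --- that the $Z$-formula has no $-xZ_{\M/i}$ term whereas the $P$-formula has $-xP_{\M/i}$ --- is absorbed by the coloop identity $Z_{\M|_{F'\cup\{i\}}}=(1+x)Z_{\M|_{F'}}$ on block $B$ (in place of $P_{\M|_{F'\cup\{i\}}}=P_{\M|_{F'}}$), which is exactly what reroutes the cascade of cancellations to the same final form. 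I expect the main obstacle to be organizational rather than conceptual: correctly assembling the three blocks of flats, tracking the rank shifts under deletion and contraction, and pinning down the signs in the terminal double sum --- this is the technical heart of the proof, and it is where the argument runs most closely parallel to the proof of Theorem~\ref{thm:bv}.
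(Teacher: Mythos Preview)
Your argument is correct, and it takes a genuinely different route from the paper's proof. The paper works in the free $\mathbb{Z}[x,x^{-1}]$-module $\mathcal{H}(\M)$, applies the deletion homomorphism $\Delta$ to the element $\Gamma^E_\M$ (respectively to an auxiliary element $\xi^E_\M$ built from M\"obius invariants), and reads off the identity by comparing coefficients of $\zeta^{\varnothing}_{\M\smallsetminus i}$ in the Kazhdan--Lusztig basis; Lemma~\ref{lem:delta_zeta_nocoloop} does the heavy lifting, and no induction on $|E|$ is used. By contrast, you never touch $\mathcal{H}(\M)$: you start from the incidence-algebra recursion $Q_\M=\sum_{\varnothing\neq F}(-1)^{\rk(F)+1}P_{\M|_F}\,Q_{\M/F}$, feed Theorem~\ref{thm:bv} into the $P_{\M|_F}$ factors on block $C=\mathcal{T}_i$, and feed the inductive hypothesis into the $Q_{\M/F'}$ factors on $\mathscr{S}_i\smallsetminus\{\varnothing\}$. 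I checked the bookkeeping: block $B$ collapses to $\Sigma_{\mathcal{T}}$, the $-xP_{\M|_F/i}$ terms from $C$ give $x\Sigma_{\mathcal{T}}$, the residual $\mathscr{S}_i$-piece gives $(1+x)(Q_{\M/i}-\Sigma_{\mathcal{T}})$ plus a double $\tau$-sum, and the parity constraint $\rk(G)\equiv\rk(F')\pmod 2$ indeed makes that double sum cancel against the $\tau$-terms from $C$ except for the $F'=\varnothing$ summands, which are exactly the claimed $\mathcal{T}_i$-sum. Your handling of the $Y$-case is also right: the factor $(1+x)$ that in the $Q$-argument is split between block $B$ (contributing $\Sigma_{\mathcal{T}}$) and block $C$ (contributing $x\Sigma_{\mathcal{T}}$) is, in the $Y$-argument, produced entirely by the coloop identity $Z_{\M|_{F'\cup\{i\}}}=(1+x)Z_{\M|_{F'}}$ on block $B$, compensating precisely for the absence of the $-xZ_{\M/i}$ term in the second line of Theorem~\ref{thm:bv}.

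What each approach buys: the paper's method is uniform and non-inductive, and it makes transparent \emph{why} the $Q$- and $Y$-formulas look identical (both arise by extracting the $\zeta^{\varnothing}$-coefficient after applying $\Delta$, and $\Delta(\zeta^G_\M)$ is computed once and for all in Lemma~\ref{lem:delta_zeta_nocoloop}). Your method is more elementary---it uses only Theorem~\ref{thm:bv} as a black box together with the defining inversion $P\cdot\widehat{Q}=\delta$---and it directly answers the paper's remark (in the paragraph after Theorem~\ref{thm:deletion_formula_Q}) that ``it is not at all clear how to deduce the desired deletion formulas from Theorem~\ref{thm:bv}'': in fact it is possible, by exactly the induction you outline. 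One small point: like the paper's own proof, your argument tacitly assumes $\{i\}\in\mathcal{L}(\M)$; the case where $i$ has a parallel element is trivial (both sides reduce to $Q_{\M\smallsetminus i}$), but you should say so.
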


Our formulas lead to new recursive computations of $Q_{\M}(x)$ and $Y_{\M}(x)$ for any matroid $\M$, starting from the base case of Boolean matroids $\U_{n,n}$. For these matroids, we have $Q_{\U_{n,n}}(x) = 1$ and $Y_{\U_{n,n}}(x) = (x+1)^n$. 
Note that the deletion formulas for $Q_{\M}(x)$ and $Y_{\M}(x)$ are exactly the same, a phenomenon that does not occur in Theorem~\ref{thm:bv}.

While there is a geometric intuition behind Theorem~\ref{thm:bv} (see the discussion in \cite[Section~1.1]{braden-vysogorets}), we lack that intuition in the present framework. In a sense, the proof of Theorem~\ref{thm:deletion_formula_Q} is purely algebraic, though combinatorics joins the game when we apply properties of incidence algebras of posets in our computations.

\medskip

As a notable application of Theorem~\ref{thm:deletion_formula_Q}, we are able to show (see Theorem~\ref{thm:counterexample-body} below) the existence of a matroid of rank $19$ which disproves Conjecture~\ref{conj:xie-zhang}.

\begin{theorem}\label{thm:counterexample-main}
    There exists a matroid $\M$ of rank $19$ on $21$ elements whose normalized inverse Kazhdan--Lusztig polynomial $\mathcal{B}(Q_{\M})(x)$ is not real-rooted.
\end{theorem}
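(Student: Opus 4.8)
The plan is to exhibit an explicit matroid $\M$ of rank $19$ on $21$ elements, compute (or set up a computation of) its inverse Kazhdan--Lusztig polynomial $Q_{\M}(x)$ using the deletion formula of Theorem~\ref{thm:deletion_formula_Q}, and then check by hand that the normalized polynomial $\mathcal{B}(Q_{\M})(x)$ fails the real-rootedness test. Since the corank is $2$, the natural candidate is a matroid built so that both $\M \smallsetminus i$ and $\M/i$ (for a suitable non-coloop $i$) are members of well-understood families — e.g.\ uniform matroids, projective geometries, or glued cycles — for which closed formulas or manageable recursions for $Q$ are available; those are precisely the families the introduction advertises as consequences of the deletion formula. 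The first step is therefore to identify the right corank-$2$ matroid (likely a relaxation/glued-cycle-type construction, or a specific sparse paving matroid of rank $19$), using the corank-$2$ formula to reduce the computation of $Q_{\M}(x)$ to data about simpler matroids.

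Next I would carry out the actual computation of the coefficient vector $(q_0, q_1, \dots, q_s)$ of $Q_{\M}(x)$, where $s = \deg Q_{\M} \le \lfloor (19-1)/2 \rfloor = 9$; this is the bulk of the work and would be organized around the recursion, possibly with a symbolic computer algebra verification that the paper would reference. Having $Q_{\M}(x)$ in hand, I would form $\mathcal{B}(Q_{\M})(x) = \sum_{k=0}^{s} \binom{s}{k} q_k x^k$ and then certify non-real-rootedness. The cleanest certificate is a violated Newton inequality: if $\mathcal{B}(Q_{\M})(x) = b_0 + b_1 x + \cdots + b_s x^s$ were real-rooted with nonnegative coefficients, then $b_k^2 \ge \frac{(k+1)(s-k+1)}{k(s-k)}\, b_{k-1} b_{k+1}$ (ultra log-concavity) must hold for all $k$; exhibiting one index $k$ where this fails immediately rules out real-rootedness. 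Alternatively, one can locate a pair of complex conjugate roots numerically and provide an interval-arithmetic or Sturm-sequence argument that $\mathcal{B}(Q_{\M})(x)$ has fewer real roots than its degree.

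The main obstacle is twofold: first, \emph{finding} the counterexample at all — the rank has to be pushed up to $19$ precisely because smaller ranks satisfy the conjecture, so the search space of candidate matroids is large and the deletion formula must be applied cleverly (iteratively, peeling off elements down to Boolean base cases $\U_{n,n}$ with $Q_{\U_{n,n}}(x)=1$) to keep the computation feasible; second, ensuring the final numerical check is a genuine proof rather than a floating-point observation, which requires either an exact rational computation of the $q_k$ followed by an exact check of a Newton inequality, or a rigorous root-counting argument. I expect the real-rootedness failure to be "just barely" a failure — a single Newton inequality violated by a small margin — so keeping exact arithmetic throughout and pinpointing the offending coefficient index will be the delicate part. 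Once the matroid and its $Q$-polynomial are pinned down, the refutation of Conjecture~\ref{conj:xie-zhang} is a finite, checkable computation, and Theorem~\ref{thm:counterexample-main} follows.
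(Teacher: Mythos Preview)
Your overall plan is the paper's plan: take a specific corank-$2$ matroid on $21$ elements, compute $Q_{\M}(x)$ via the corank-$2$ formula (which in turn rests on the deletion formula and the glued-cycle computation), normalize, and check that $\mathcal{B}(Q_{\M})(x)$ is not real-rooted. The paper's matroid is the dual of the rank-$2$ matroid whose atoms partition $\{1,\dots,21\}$ into blocks of sizes $(4,4,4,3,3,3)$; the resulting $Q_{\M}(x)$ has degree $9$ and the paper simply reports a pair of complex conjugate roots of $\mathcal{B}(Q_{\M})(x)$ found by SageMath.

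There is, however, a real gap in your proposed certificate. Your ``cleanest'' plan is to exhibit a violated Newton inequality for $\mathcal{B}(Q_{\M})$, and you explicitly expect ``a single Newton inequality violated by a small margin.'' But the Newton inequalities for $\mathcal{B}(Q_{\M})(x)=\sum_k \binom{s}{k} q_k x^k$ are \emph{exactly} the log-concavity inequalities $q_k^2 \ge q_{k-1} q_{k+1}$ for $Q_{\M}$, and for this matroid those inequalities all \emph{hold}: the paper remarks at the end that no corank-$2$ matroid up to size $35$ violates log-concavity of $Q_{\M}$, and one can check directly from the explicit coefficient list $(163,1790,10323,39217,106659,215169,323646,350404,232662,71162)$ that the sequence is log-concave. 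So the Newton-inequality route cannot possibly succeed here; the failure of real-rootedness is strictly subtler than a failure of ultra-log-concavity. You must fall back on your alternative (Sturm sequences or a rigorous root-counting argument on the exact integer polynomial) as the \emph{primary} certificate, not the backup. Adjust your expectations and your write-up accordingly.
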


Let us explain how we were led to the construction of the counterexample. Three crucial ingredients were needed:
    \begin{itemize}
        \item The valuativity of the inverse Kazhdan--Lusztig polynomial.
        \item A closed formula for the inverse Kazhdan--Lusztig polynomial of the graphic matroid associated to the graph consisting of two cycles glued along an edge.
        \item A formula for the inverse Kazhdan--Lusztig polynomial of an arbitrary matroid of corank $2$.
    \end{itemize}

The first item in the above list is a result proved by Ardila and Sanchez \cite[Theorem~8.8]{ardila-sanchez}. The second item is a consequence of Theorem~\ref{thm:deletion_formula_Q} that we derive in the present paper (see Proposition~\ref{prop:glued-cycles}). The third, stated as Theorem~\ref{thm:crank2}, relies on the machinery of Ferroni and Schr\"oter \cite{ferroni-schroter} on valuative invariants of elementary split matroids.

Besides these two main results, we also provide in Section~\ref{sec:examples} and Section~\ref{sec:counterexample} several computational examples. They include formulas for our polynomials in the cases of uniform matroids, glued cycles, projective geometries with an element deleted, and corank $2$ matroids.

\section{Preliminaries}

We will assume familiarity with the essentials of matroid theory. For undefined terminology on matroids and geometric lattices, we refer to \cite{welsh,oxley}. Unless indicated otherwise, in this paper we will typically assume that our matroid $\M$ is loopless, its ground set is $E$, and the rank of a subset $A\subseteq E$ is denoted by $\rk_{\M}(A)$, or simply $\rk(A)$ if $\M$ is understood by context.

We will make consistent use of the following notions. Throughout this paper, we denote by $\mathcal{L}(\M)$ the lattice of flats of any matroid.  The \emph{characteristic polynomial} of a loopless matroid $\M$ is the polynomial $\chi_{\M}(x) \in \mathbb{Z}[x]$ given by
    \[\chi_{\M}(x) := \sum_{F\in\mathcal{L}(\M)} \mu(\varnothing, F)\, x^{\rk(\M)-\rk(F)},\]
where the number $\mu(\varnothing,F)$ is the value assigned by the M\"obius function of $\mathcal{L}(\M)$ to the closed interval $[0,F]\subseteq \mathcal{L}(\M)$ (see \cite[Chapter~3]{stanley-ec1}). A relevant specialization of the characteristic polynomial is the so-called \emph{M\"obius invariant} of $\M$, which is in turn defined by
    \[ \mu(\M) := \mu(\varnothing,E) = \chi_{\M}(0).\]

\subsection{Incidence algebras and kernels} The \emph{incidence algebra} of $\mathcal{L}(\M)$ over a commutative unitary ring $R$, denoted by $\mathcal{I}_R$, is the free $R$-module spanned by all the closed intervals of $\mathcal{L}(\M)$. In other words, an element $a\in \mathcal{I}_R$ associates to each pair of flats $F\subseteq G$ of $\M$ an element $a_{FG}\in R$. The product (also known as convolution) of two elements $a,b\in \mathcal{I}_R$ is defined via

    \[ (ab)_{FG} = \sum_{\substack{H\in\mathcal{L}(\M)\\F\subseteq H\subseteq G}} a_{FH}\, b_{HG}, \qquad \text{ for every $F\subseteq G$ in $\mathcal{L}(\M)$}.\]

The algebra $\mathcal{I}_R$ satisfies the following basic properties:

\begin{enumerate}[(i)]
    \item The product in $\mathcal{I}_R$ is associative.
    \item There is a multiplicative identity $\delta\in \mathcal{I}_R$ defined by
        \[ \delta_{FG} = \begin{cases} 1 & \text{ if $F = G$},\\ 0 & \text{ if $F \neq G$}.\end{cases}\]
    \item An element $a\in \mathcal{I}_R$ is invertible if and only if $a_{FF}$ is invertible in $R$ for every $F\in \mathcal{L}(\M)$.
\end{enumerate}

Let us consider the case in which $R = \mathbb{Z}[x,x^{-1}]$, and let us denote $\mathcal{I} = \mathcal{I}_R$. There is an involution $a\mapsto a^{\rev}$ on $\mathcal{I}$ defined by
    \[ (a^{\rev})_{FG}(x) = x^{\rk_{\M}(G) - \rk_{\M}(F)}\, a_{FG}(x^{-1}),\]
which ``reverses'' coefficients.

An element $\kappa\in \mathcal{I}$ is said to be a \emph{kernel} whenever $\kappa^{\rev} = \kappa^{-1}$. It is a well-known fact that the element $\chi\in \mathcal{I}$ assigning to each closed interval $[F,G]$ the characteristic polynomial of the matroid $\M|_G/F$ is a kernel. (See \cite{stanley-local,proudfoot-kls,ferroni-matherne-vecchi} for a thorough discussion of the notion of kernels and for further examples.)

\subsection{A recapitulation on the Kazhdan--Lusztig invariants}\label{sec:recap-kl}

The primary objects of study in the present paper are the four polynomials mentioned in the introduction. For the sake of completeness, we include below two theorems which play the role of definitions for them.

\begin{theorem}[{\cite{proudfoot-xu-young,braden-vysogorets}}]\label{thm:definition-kl-and-zeta}
    There is a unique way to assign to each loopless matroid $\M$ polynomials $P_{\M}(x), Z_{\M}(x) \in \Z[x]$ such that the following properties hold:
    \begin{enumerate}[\normalfont(i)]
        \item If $\rk(\M) = 0$, then $P_{\M}(x) = 1$.
        \item If $\rk(\M) > 0$, then $\deg P_{\M}(x) < \frac{1}{2} \rk(\M)$.
        \item For every matroid $\M$, the polynomial
            \[ Z_{\M}(x) := \sum_{F\in \mathcal{L}(\M)} x^{\rk(F)}\, P_{\M/F}(x)\]
        is palindromic.
    \end{enumerate}
\end{theorem}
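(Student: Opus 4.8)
This is the classical existence/uniqueness argument, run by induction on $d := \rk(\M)$ and powered by a degree-reversal symmetry. For the base case $d = 0$ the matroid is empty, condition~(i) forces $P_{\M}(x) = 1$, and then condition~(iii) gives $Z_{\M}(x) = x^{0}P_{\M/\varnothing}(x) = 1$, which is trivially palindromic; so both the existence and the uniqueness of the data are clear in rank $0$. Assume now $d \geq 1$ and that $P_{\N}$ has already been constructed, uniquely, for every loopless matroid $\N$ with $\rk(\N) < d$. For every nonempty flat $F \in \mathcal{L}(\M)$ the minor $\M/F$ is loopless of rank $d - \rk(F) < d$, so the polynomial
\[
  W_{\M}(x) \;:=\; \sum_{\varnothing \neq F \in \mathcal{L}(\M)} x^{\rk(F)}\, P_{\M/F}(x)
\]
is already determined by the inductive hypothesis. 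The plan is to observe that $\deg W_{\M} = d$ with $[x^{d}]W_{\M} = 1$ (the term $F = E$ contributes $x^{d}$, and every other summand has degree $\rk(F) + \deg P_{\M/F} < \rk(F) + \tfrac12(d-\rk(F)) \le d-1$), so that condition~(iii) becomes precisely the requirement that $Z_{\M} := P_{\M} + W_{\M}$ be palindromic of degree $d$. Thus the theorem reduces, at rank $d$, to showing that there is exactly one $P_{\M} \in \Z[x]$ with $\deg P_{\M} < d/2$ such that $P_{\M} + W_{\M}$ is palindromic of degree $d$. (Equivalently, one can phrase everything in the incidence algebra $\mathcal{I}$ in terms of the kernel $\chi$ and the $\rev$ involution of the excerpt, but I will run the elementary polynomial version.)

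\textbf{Existence.} For a polynomial $f$ of degree $\le d$ write $f^{*}(x) := x^{d} f(1/x)$; with this convention ``palindromic of degree $d$'' means $f^{*} = f$. I would set $g := W_{\M}^{*} - W_{\M}$, which is anti-palindromic ($g^{*} = -g$); in particular its coefficient in the middle degree $d/2$, if that is an integer, must vanish. Hence $g$ splits as $g = g_{\mathrm{lo}} + g_{\mathrm{hi}}$ with $g_{\mathrm{lo}}$ supported in degrees $< d/2$ and $g_{\mathrm{hi}}$ in degrees $> d/2$ and nothing in between. Define $P_{\M} := g_{\mathrm{lo}} = \bigl(W_{\M}^{*} - W_{\M}\bigr)_{\mathrm{lo}}$. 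Then $P_{\M} \in \Z[x]$ and $\deg P_{\M} < d/2$ by construction, so conditions~(i)--(ii) hold; it remains to check condition~(iii), i.e. that $Z_{\M} := P_{\M} + W_{\M}$ is palindromic. For this, note that $(g_{\mathrm{lo}})^{*}$ is supported in degrees $> d/2$ while $(g_{\mathrm{hi}})^{*}$ is supported in degrees $< d/2$; comparing the two sides of $g^{*} = -g$ in degrees $> d/2$ yields $P_{\M}^{*} = (g_{\mathrm{lo}})^{*} = -g_{\mathrm{hi}}$, whence $P_{\M}^{*} - P_{\M} = -g_{\mathrm{hi}} - g_{\mathrm{lo}} = -g = W_{\M} - W_{\M}^{*}$, and therefore $Z_{\M}^{*} - Z_{\M} = (P_{\M}^{*} - P_{\M}) + (W_{\M}^{*} - W_{\M}) = 0$. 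Since $[x^{d}]Z_{\M} = [x^{d}]W_{\M} = 1 \neq 0$, the degree used in the reversal is indeed $d$, so $Z_{\M}$ is genuinely palindromic.

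\textbf{Uniqueness.} If $P_{\M}$ and $P_{\M}'$ both satisfy (i)--(iii) at rank $d$ (with the same, inductively fixed, lower-rank data, hence the same $W_{\M}$), then $q := P_{\M} - P_{\M}' = Z_{\M} - Z_{\M}'$ is simultaneously palindromic of degree $\le d$ and of degree $< d/2$; but a palindromic $q = q^{*}$ supported in degrees $< d/2$ has $q^{*}$ supported in degrees $> d/2$, forcing $q = 0$. This closes the induction, establishing existence and uniqueness for all loopless $\M$. The only genuinely delicate point in this plan is the parity bookkeeping in the existence step: one must verify that $g = W_{\M}^{*} - W_{\M}$ carries no term in the central degree $d/2$, because that is exactly what guarantees that its ``low half'' $g_{\mathrm{lo}}$ recovers an admissible $P_{\M}$ of degree $< d/2$ and simultaneously forces the ``high half'' to equal $P_{\M}^{*}$; once that is in place, the palindromicity of $Z_{\M}$ is purely formal.
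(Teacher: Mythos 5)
The paper does not prove this statement at all---it is quoted from Proudfoot--Xu--Young and Braden--Vysogorets and serves as a definition---and your induction on $\rk(\M)$ with the degree-reversal splitting of $g=W_{\M}^{*}-W_{\M}$ is precisely the standard argument from those sources; it is correct, including the key observation that antipalindromicity kills the central coefficient in degree $d/2$. (One cosmetic slip: $\rk(F)+\tfrac12(d-\rk(F))=\tfrac{d+\rk(F)}{2}$ is only bounded by $d-\tfrac12$ when $\rk(F)=d-1$, not by $d-1$ as written, but the conclusion $\deg\le d-1$ still holds since degrees are integers.)
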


The polynomial $P_{\M}(x)$ is called the \emph{Kazhdan--Lusztig polynomial} of $\M$, and was first introduced by Elias, Proudfoot, and Wakefield in \cite{elias-proudfoot-wakefield}. The polynomial $Z_{\M}(x)$ is called the \emph{$Z$-polynomial} of $\M$ and was considered first in the work of Proudfoot, Xu, and Young \cite{proudfoot-xu-young}.

\begin{theorem}[{\cite{braden-huh-matherne-proudfoot-wang,ferroni-matherne-stevens-vecchi, gao-ruan-xie}}]\label{thm:definition-q-and-y}
    There is a unique way to assign to each loopless matroid $\M$ polynomials $Q_{\M}(x), Y_{\M}(x) \in \Z[x]$ such that the following properties hold:
    \begin{enumerate}[\normalfont(i)]
        \item If $\rk(\M) = 0$, then $Q_{\M}(x) = 1$.
        \item If $\rk(\M) > 0$, then $\deg Q_{\M}(x) < \frac{1}{2} \rk(\M)$.
        \item For every matroid $\M$, the polynomial
            \[ Y_{\M}(x) := \sum_{F\in \mathcal{L}(\M)}(-1)^{\rk(\M) - \rk(F)}\mu(\M/F) \, x^{\rk(\M) - \rk(F)}\, Q_{\M|_F}(x)\]
        is palindromic.
    \end{enumerate}
\end{theorem}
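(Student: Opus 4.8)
The statement is a uniqueness-and-existence claim of exactly the same shape as Theorem~\ref{thm:definition-kl-and-zeta}, so the plan is to run the identical Möbius-inversion argument in the incidence algebra $\cI$, replacing the kernel $\chi$ by a suitable ``dual'' kernel. Concretely, for any invertible $\zeta \in \cI$ one has a chain of equivalences: specifying $Q_{\M}$ for all $\M$ is the same as specifying an element $q \in \cI$ with $q_{FG} = Q_{\M|_G/F}(x)$; condition (iii) rewrites as $(q\zeta)^{\rev}_{\varnothing E} = (q\zeta)_{\varnothing E}$ after the correct choice of $\zeta$; and conditions (i)--(ii) are a normalization plus a degree bound that pin down $q$ uniquely given that palindromicity constraint. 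So first I would identify the element of $\cI$ whose entry on $[F,G]$ encodes the summand $(-1)^{\rk(G)-\rk(F)}\mu(\M|_G/F)\,x^{\rk(G)-\rk(F)}$ in the definition of $Y_{\M}$. This is exactly $\chi^{\rev}$: indeed $\chi_{FG}(x) = \chi_{\M|_G/F}(x) = \sum_{H\in[F,G]}\mu(F,H)x^{\rk(G)-\rk(H)}$, so $(\chi^{\rev})_{FG}(x) = x^{\rk(G)-\rk(F)}\chi_{FG}(x^{-1}) = \sum_{H}\mu(F,H)x^{\rk(H)-\rk(F)}$, and evaluating $\mu(F,H)$ against the Möbius invariant of $\M|_H/F$ recovers $(-1)^{\rk(H)-\rk(F)}\mu(\M|_H/F)$. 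Hence $Y_{\M}(x)$ is precisely the $\varnothing E$-entry of $q\,\chi^{\rev}$, where $q\in\cI$ is built from the (to-be-constructed) polynomials $Q_{\M|_G/F}$.

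The core of the argument is then the following inductive construction on $\rk(\M)$. For $\rk(\M)=0$ set $Q_{\M}=1$ (forced by (i)). Assume $Q_{\N}$ has been defined and is unique for every loopless matroid $\N$ of rank $<\rk(\M)=k$; this in particular determines $Q_{\M|_F/G}$ for every flag $G\subsetneq F\subseteq E$ with $F\neq E$, i.e.\ it determines every entry $q_{FG}$ of $q$ except the top entry $q_{\varnothing E}=Q_{\M}$. Now impose (iii): $Y_{\M}=\big(q\,\chi^{\rev}\big)_{\varnothing E}$ must be palindromic of degree $k$, i.e.\ $Y_{\M}^{\rev}=Y_{\M}$ in $\Z[x]$. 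Expanding $\big(q\,\chi^{\rev}\big)_{\varnothing E} = \sum_{F\in\mathcal L(\M)} q_{\varnothing F}\,(\chi^{\rev})_{FE}$ and isolating the unknown term (the $F=\varnothing$ term contributes $Q_{\M}(x)\cdot(\chi^{\rev})_{\varnothing E}(x) = Q_{\M}(x)\cdot(-1)^k\mu(\M)x^k\cdot(\text{lower order})$, while the $F=E$ term contributes $Q_{\M}(x)$ itself), one gets an equation of the form $Q_{\M}(x) + (\text{known, of degree}<k) = \text{palindromic}$. The degree bound (ii), $\deg Q_{\M}<k/2$, then selects $Q_{\M}$ uniquely: writing the ``known'' part as $R(x)$ (a Laurent polynomial that is in fact a polynomial of degree $<k$, with no constant-degree obstruction once $\rk(\M)>0$ because $\mu(\M|_F)\neq 0$ guarantees the relevant leading terms behave), the palindromicity forces $Q_{\M}(x) = $ the unique polynomial supported in degrees $0,\dots,\lceil k/2\rceil-1$ such that $Q_{\M}(x)+R(x)$ equals its own reverse; this is the standard ``lower-half determines the palindrome'' fact, giving $Q_{\M}(x) = \sum_{j<k/2}\big([x^j]R^{\rev}(x) - [x^j]R(x)\big)x^j$ or an equivalent explicit formula, hence both existence and uniqueness. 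Finally, $Y_{\M}$ is then defined by the formula in (iii) and is palindromic by construction, completing the induction.

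For the $P,Z$ pair the same scheme applies verbatim with $\chi^{\rev}$ replaced by the kernel $\chi$ (which is why $Z_{\M}=\big(q\chi\big)_{\varnothing E}$ with $q_{FG}=P_{\M|_G/F}$ recovers $Z_{\M}(x)=\sum_F x^{\rk F}P_{\M/F}(x)$), so strictly speaking only the $Q,Y$ half is new; one should remark that the two halves are formally dual under $\rev$, since $\chi$ and $\chi^{\rev}=\chi^{-1}$ are exchanged. The main obstacle I anticipate is bookkeeping the degrees carefully enough to be sure the ``lower-half determines palindrome'' step is not obstructed: one must verify that the known remainder $R(x)$ never has a term in degree exactly $k/2$ when $k$ is even that would be incompatible with a degree-$k$ palindrome whose middle coefficient is unconstrained by (ii) — this is handled by noting that $\deg Q_{\M|_G/F}$ for the entries feeding into $R$ satisfies the inductive degree bound, so every contribution to $R$ in degrees $\ge k/2$ comes paired with its reverse and is automatically consistent, leaving the coefficients in degrees $<k/2$ as the only genuine unknowns. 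Checking that this holds uniformly (including the edge cases $k=1,2$ and the role of looplessness ensuring $\mu(\M|_F)\neq 0$ so that $\chi^{\rev}$ is genuinely a kernel) is the technical heart; everything else is the now-routine Möbius-inversion-with-a-kernel argument already used for $P$ and $Z$.
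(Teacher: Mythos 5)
The paper states this result as a definition--theorem with citations and does not prove it, so the comparison is with the standard Kazhdan--Lusztig--Stanley argument of the cited references; your overall plan (induction on rank, isolate the single unknown entry $Q_{\M}$, then invoke the degree bound together with the ``lower half determines a palindrome'' principle) is exactly that argument and is sound. However, two concrete errors sit in your setup. First, the incidence-algebra element you identify is wrong: $(\chi^{\rev})_{FG}(x)=\sum_{H\in[F,G]}\mu(F,H)\,x^{\rk(H)-\rk(F)}$ is the full Poincar\'e polynomial of the interval, a sum over all intermediate flats $H$, whereas the coefficient of $Q_{\M|_F}(x)$ in $Y_{\M}(x)$ is the single term $(-1)^{\rk(\M)-\rk(F)}\mu(F,E)\,x^{\rk(\M)-\rk(F)}$. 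Hence $Y_{\M}$ is \emph{not} $(q\,\chi^{\rev})_{\varnothing E}$; already for $\M=\U_{1,1}$ your expression gives $2-x$ while $Y_{\U_{1,1}}=1+x$. The element you actually need is the inverse of $\alpha_{FG}=x^{\rk(G)-\rk(F)}$, namely $(\alpha^{-1})_{FG}=\mu(F,G)\,x^{\rk(G)-\rk(F)}$, so that $\widehat{Y}=\widehat{Q}\,\alpha^{-1}$ is dual to $Z=\alpha P$, not to the kernel $\chi$ itself. Second, your expansion has the unknown appearing twice (you assert the $F=\varnothing$ term contributes $Q_{\M}(x)\cdot(\chi^{\rev})_{\varnothing E}$ \emph{and} the $F=E$ term contributes $Q_{\M}(x)$); in the convolution $\sum_F q_{\varnothing F}\,m_{FE}$ the top entry $q_{\varnothing E}=Q_{\M}$ occurs in exactly one summand with coefficient $1$, and the $F=\varnothing$ summand is the fully known $m_{\varnothing E}$. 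Your displayed equation ``$Q_{\M}+(\text{known})=\text{palindromic}$'' is the correct one, but it does not follow from the expansion as you wrote it.

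With those repairs the proof closes, and in fact the incidence-algebra packaging is dispensable: formula (iii) directly gives $Y_{\M}=Q_{\M}(x)+R(x)$ with $R(x)=\sum_{F\subsetneq E}(-1)^{\rk(\M)-\rk(F)}\mu(\M/F)\,x^{\rk(\M)-\rk(F)}Q_{\M|_F}(x)$ known by induction on rank (each $\M|_F$ is loopless of rank $\rk(F)<\rk(\M)$). Note $\deg R=k:=\rk(\M)$ with top coefficient $(-1)^k\mu(\M)=|\mu(\M)|\neq 0$ --- not $\deg R<k$ as you wrote --- and this nonvanishing is what fixes the center of symmetry at $k/2$. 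Palindromicity in degree $k$ then forces $[x^j]Q_{\M}=[x^{k-j}]R-[x^j]R$ for $0\le j<k/2$, the condition at $j=k/2$ is vacuous when $k$ is even, and the conditions for $j>k/2$ are mirrors of those already imposed, giving both existence and uniqueness exactly as you describe. A last small point: looplessness is not what makes $\chi$ a kernel (invertibility only requires the diagonal entries to equal $1$); it is what guarantees that each $\M|_F$ is a legitimate input for the induction and that $\mu(\M)\neq 0$, so that $Y_{\M}$ genuinely has degree $\rk(\M)$.
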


The polynomial $Q_{\M}(x)$ is called the \emph{inverse Kazhdan--Lusztig polynomial} of $\M$, and was first considered by Gao and Xie in \cite{gao-xie}. The polynomial $Y_{\M}(x)$ is called the \emph{inverse $Z$-polynomial} of $\M$. It was first mentioned in the work of Ferroni, Matherne, Stevens, and Vecchi \cite{ferroni-matherne-stevens-vecchi}, and it was later addressed in more detail in the recent work by Gao, Ruan, and Xie \cite{gao-ruan-xie}. 

The reason these polynomials carry the word ``inverse'' in their name is because they can be defined from the Kazhdan--Lusztig and $Z$-polynomials by working in the incidence algebra of $\mathcal{L}(\M)$ over $R=\mathbb{Z}[x,x^{-1}]$. Defining $P\in \mathcal{I}$ as the element associating to each interval $[F,G]\subseteq \mathcal{L}(\M)$ the polynomial
    \[ P_{FG}(x) = P_{\M|_G/F}(x),\]
and defining $Q,Z,Y\in \mathcal{I}$ analogously,  the following relationships hold:
    \[ P^{-1} = \widehat{Q} \qquad \text{ and } \qquad Z^{-1} = \widehat{Y}, \]
where $\widehat{Q}_{\M}(x) = (-1)^{\rk(\M)} Q_{\M}(x)$ and $\widehat{Y}_{\M}(x) = (-1)^{\rk(\M)}\, Y_{\M}(x)$. Put more succinctly, the element $Q$ (resp $Y$) is, up to a sign, the inverse of $P$ (resp. $Z$) in the incidence algebra of $\mathcal{L}(\M)$.

\section{Proof of the deletion formulas}

\subsection{The Kazhdan--Lusztig basis} Let $\M$ be a matroid and $\cL(\M)$ be its lattice of flats. Following \cite{braden-vysogorets}, we define the free $\mathbb{Z}[x, x^{-1}]$-module $\cH(\M)$  generated by the elements $\Gamma_{\M}^F$, one for each flat $F\in \cL(\M)$.
Elements of $\cH(\M)$ are formal sums of the form
\begin{equation}\label{eq:alpha}
    \alpha = \sum_{F \in \cL(\M)} \alpha_F(x) \cdot \Gamma_{\M}^F, \quad \alpha_F \in \mathbb{Z}
    [x, x^{-1}].
\end{equation}
In general, an element in the module $\cH(\M)$ will be identified by the presence of a subindex\footnote{In \cite{braden-vysogorets}, the element $\Gamma_{\M}^F$ is denoted by $F$. Our change of notation, carrying $\M$ as a subindex, bypasses any potential confusion when the same set $F$ is a flat of two different matroids.} $\M$. The basis $\left\{\Gamma^F_{\M}\right\}_{F\in\mathcal{L}(\M)}$ will often be referred to as the standard basis of $\cH(\M)$, but several other bases will be used throughout our proofs.

There is an involution in $\mathcal{H}(\M)$, denoted $\alpha \mapsto \overline{\alpha}$, and defined for any element $\alpha$ as in equation~\eqref{eq:alpha} by
    \[ \overline{\alpha} := \sum_{F\in\mathcal{L}(\M)} \alpha_F(x^{-1}) \, \overline{\Gamma^F_{\M}},\]
where $\overline{\Gamma^F_{\M}}$ is defined by
    \[ \overline{\Gamma^F_{\M}} := \sum_{\substack{G\in \mathcal{L}(\M)\\ G\subseteq F}} x^{2(\rk(G) - \rk(F))} \, \chi_{\M|_F/G}(x^2)\cdot \Gamma^G_{\M}.\]
The assertion $\overline{\overline{\Gamma^F_{\M}}} =\Gamma^F_{\M}$ (and therefore that $\alpha$ is indeed an involution) is formally equivalent to the fact that the characteristic polynomial is a kernel in the incidence algebra of $\mathcal{L}(\M)$. Notice that, in each summand of the above display the first two factors correspond to $\chi^{\rev}_{FG}(x^{-2})$.

There is a special $\mathbb{Z}$-submodule $\mathcal{H}_p(\M)\subseteq \mathcal{H}(\M)$ which consists of all the elements fixed by the above involution.  The following was proved by Braden and Vysogorets in \cite[Proposition~2.13]{braden-vysogorets}. 

\begin{proposition}
    There exists a $\mathbb{Z}$-basis (called the Kazhdan--Lusztig basis) of $\mathcal{H}_p(\M)$ given by the collection $\left\{ \zeta_{\M}^F : F\in\mathcal{L}(\M)\right\}$, where
\begin{equation}
    \label{eq:zeta_definition}
    \zeta_{\M}^F := \sum_{\substack{G \in \mathcal{L}(\M)\\ G \subseteq F}} x^{\rk(F) - \rk(G)} P_{\M|_F/G}(x^{-2}) \cdot \Gamma^G_{\M}.
\end{equation}
\end{proposition}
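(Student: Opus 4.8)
The plan is to establish three facts: (i)~each $\zeta^F_{\M}$ is fixed by the involution, so $\zeta^F_{\M}\in\mathcal{H}_p(\M)$; (ii)~the family $\{\zeta^F_{\M}\}_{F\in\cL(\M)}$ is a $\Z[x,x^{-1}]$-basis of $\cH(\M)$; and (iii)~its $\Z$-span is all of $\mathcal{H}_p(\M)$. Fact~(ii) is immediate from~\eqref{eq:zeta_definition}: the coefficient of $\Gamma^G_{\M}$ in $\zeta^F_{\M}$ vanishes unless $G\subseteq F$, and the $G=F$ term contributes exactly $\Gamma^F_{\M}$ (a rank-$0$ matroid has $P=1$), so the transition matrix between $\{\Gamma^G_{\M}\}$ and $\{\zeta^F_{\M}\}$ is triangular for the inclusion order on $\cL(\M)$ with $1$'s on the diagonal, hence invertible over $\Z[x,x^{-1}]$. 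The degree bound $\deg P_{\M}(x)<\frac{1}{2}\rk(\M)$ refines this: for $G\subsetneq F$ every exponent occurring in $x^{\rk(F)-\rk(G)}P_{\M|_F/G}(x^{-2})$ has the form $\rk(F)-\rk(G)-2j$ with $0\le j\le\deg P_{\M|_F/G}$, hence is $\ge 1$; thus $\zeta^F_{\M}\in\Gamma^F_{\M}+\sum_{G\subsetneq F}x\,\Z[x]\cdot\Gamma^G_{\M}$.

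The substance is fact~(i). Writing $\zeta^F_{\M}=\sum_{G\subseteq F}c^F_G(x)\,\Gamma^G_{\M}$ with $c^F_G(x)=x^{\rk(F)-\rk(G)}P_{\M|_F/G}(x^{-2})$, I would apply the involution, re-expand each $\overline{\Gamma^G_{\M}}$ in the standard basis, and compare the coefficient of $\Gamma^H_{\M}$ on both sides. This shows that $\overline{\zeta^F_{\M}}=\zeta^F_{\M}$ is equivalent—after the substitution $y=x^2$—to the identities
\[
\sum_{H\subseteq G\subseteq F}P_{\M|_F/G}(y)\,\chi_{\M|_G/H}(y)=y^{\rk(F)-\rk(H)}\,P_{\M|_F/H}(y^{-1})\qquad(H\subseteq F),
\]
i.e.\ to the single relation $\chi P=P^{\rev}$ in $\mathcal{I}$ (with indeterminate $y$), where $P_{FG}(y)=P_{\M|_G/F}(y)$ and $\chi$ is the characteristic-polynomial kernel. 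To prove $\chi P=P^{\rev}$, introduce $\xi\in\mathcal{I}$ with $\xi_{FG}(y)=y^{\rk(G)-\rk(F)}$ and the M\"obius element $\mu\in\mathcal{I}$ with $\mu_{FG}=\mu(F,G)$. A routine check shows $(\xi P)_{FG}(y)=Z_{\M|_G/F}(y)$, that $\xi^{\rev}$ is the zeta function of $\cL(\M)$—whose inverse in $\mathcal{I}$ is $\mu$—and that $\mu\,\xi=\chi$ (the last being just the definition of the characteristic polynomial unwound). Since $\xi P=Z$ is $\rev$-fixed by Theorem~\ref{thm:definition-kl-and-zeta}(iii) and $\rev$ is a ring homomorphism of $\mathcal{I}$, we get $\mu^{-1}P^{\rev}=\xi^{\rev}P^{\rev}=(\xi P)^{\rev}=\xi P$, and left-multiplying by $\mu$ gives $P^{\rev}=\mu\,\xi\,P=\chi P$. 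Thus bar-invariance of the Kazhdan--Lusztig basis is a repackaging of the palindromicity of the $Z$-polynomial.

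For fact~(iii), the inclusion $\bigoplus_F\Z\,\zeta^F_{\M}\subseteq\mathcal{H}_p(\M)$ is fact~(i). Conversely, given $\alpha\in\mathcal{H}_p(\M)$, write $\alpha=\sum_F a_F(x)\,\zeta^F_{\M}$ with $a_F\in\Z[x,x^{-1}]$ using fact~(ii); since the involution is $\Z[x,x^{-1}]$-semilinear ($x\mapsto x^{-1}$) and fixes each $\zeta^F_{\M}$, the equality $\overline{\alpha}=\alpha$ forces $a_F(x)=a_F(x^{-1})$ for every $F$. I would then argue by downward induction on $\cL(\M)$: if $F$ is maximal among the flats with $a_F\neq 0$, then by the refined form of fact~(ii) the coefficient of $\Gamma^F_{\M}$ in $\alpha$ equals $a_F(x)$, and since this coefficient lies in $\Z[x]$—as it does for every element of $\mathcal{H}_p(\M)$—and is invariant under $x\mapsto x^{-1}$, it must be a constant in $\Z$; replacing $\alpha$ by $\alpha-a_F\zeta^F_{\M}\in\mathcal{H}_p(\M)$ and iterating gives $a_F\in\Z$ for all $F$. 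With the $\Z$-linear independence from fact~(ii), this proves $\{\zeta^F_{\M}\}$ is a $\Z$-basis of $\mathcal{H}_p(\M)$. The main obstacle is the bookkeeping in fact~(i): moving correctly between $\cH(\M)$, where powers of $x$ record rank differences and $P,\chi$ are evaluated at $x^{\pm2}$, and the incidence algebra $\mathcal{I}$ in the variable $y=x^2$, and then recognizing $\chi P=P^{\rev}$ as Theorem~\ref{thm:definition-kl-and-zeta}(iii) in disguise.
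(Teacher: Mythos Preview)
The paper does not prove this proposition itself; it cites \cite[Proposition~2.13]{braden-vysogorets} and adds only two remarks: that the $\zeta^F_{\M}$ trivially form a $\Z[x,x^{-1}]$-basis of $\cH(\M)$, and that their bar-invariance is equivalent to property~(iii) of Theorem~\ref{thm:definition-kl-and-zeta}. Your facts~(ii) and~(i) are precisely these two points, and your incidence-algebra derivation of $\chi P=P^{\rev}$ from the palindromicity $(\xi P)^{\rev}=\xi P$ of the $Z$-polynomial is a correct and pleasant way to make the second remark explicit, going well beyond what the paper spells out.

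The genuine gap is in your fact~(iii). You assert that for $\alpha\in\mathcal{H}_p(\M)$ the standard-basis coefficients lie in $\Z[x]$, and then combine this with $a_F(x)=a_F(x^{-1})$ to force $a_F\in\Z$. But under the paper's definition of $\mathcal{H}_p(\M)$ as the set of \emph{all} bar-fixed elements, that assertion is false: since each $\zeta^F_{\M}$ is bar-fixed and the involution commutes with multiplication by any Laurent polynomial satisfying $f(x)=f(x^{-1})$, the element $(x+x^{-1})\,\zeta^F_{\M}$ lies in $\mathcal{H}_p(\M)$, yet its $\Gamma^F_{\M}$-coefficient is $x+x^{-1}\notin\Z[x]$. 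In fact the full fixed-point submodule is free over $\Z[x+x^{-1}]$ on the $\zeta^F_{\M}$, hence not finitely generated over $\Z$, so the proposition as literally stated cannot hold. In the standard Kazhdan--Lusztig framework one imposes the extra condition that the $\Gamma$-coefficients lie in $\Z[x]$; with that condition added to the definition of $\mathcal{H}_p(\M)$ your downward induction goes through verbatim. So the defect is as much in the paper's formulation as in your argument, but you should flag the missing hypothesis rather than invoke it as though it were automatic.
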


It is a straightforward fact that the collection $\{\zeta^F_{\M}\}_{F\in\mathcal{L}(\M)}$ forms a $\mathbb{Z}[x,x^{-1}]$-basis of the free module $\cH(\M)$. The fact that the elements are in $\mathcal{H}_p(\M)$ is equivalent to property (iii) of Theorem \ref{thm:definition-kl-and-zeta}. The next lemma expresses the standard basis $\left\{\Gamma^F_{\M}\right\}_{F\in\mathcal{L}(\M)}$ of $\cH(\M)$ in terms of the Kazhdan--Lusztig basis. In order to state this change of basis explicitly, we make use of the following notation:

\begin{equation}
    \widehat{Q}_\M(x) := (-1)^{\rk(\M)} Q_\M(x).
\end{equation}

\begin{lemma}\label{lem:standard_basis_in_zeta}
    For every flat $F\in\mathcal{L}(\M)$, the following equality holds in $\mathcal{H}(\M)$:
    \begin{equation}
        \label{cob-lemma}
        \Gamma^F_{\M} = \sum_{\substack{G \in \mathcal{L}(\M)\\ G \subseteq F}} x^{\rk(F) - \rk(G)} \widehat{Q}_{\M|_F/G}(x^{-2}) \cdot \zeta^G_\M.
    \end{equation}
\end{lemma}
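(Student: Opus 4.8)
The plan is to exhibit the right-hand side of \eqref{cob-lemma} as the unique expression of $\Gamma^F_{\M}$ in the Kazhdan--Lusztig basis, by inverting the change of basis \eqref{eq:zeta_definition}. Working in the incidence algebra $\mathcal{I}$ over $R = \Z[x,x^{-1}]$, note that \eqref{eq:zeta_definition} says that the matrix expressing $\{\zeta^F_{\M}\}$ in terms of the standard basis $\{\Gamma^G_{\M}\}$ has $(G,F)$-entry $x^{\rk(F)-\rk(G)} P_{\M|_F/G}(x^{-2})$, for $G \subseteq F$. In the language of $\mathcal{I}$, this is the element $P^{\,\sigma}$, where $\sigma$ is the substitution $x \mapsto x^{-2}$ followed by multiplication of the $[G,F]$-entry by $x^{\rk(F)-\rk(G)}$ — in other words, it is $(P(x^{-2}))^{\rev}$ in the notation of the excerpt, with $x$ replaced by $x^{-1}$ first. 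The key point is that this operation $a \mapsto \big(a(x^{-2})\big)^{\rev}$ is a ring antihomomorphism... in fact a ring homomorphism on the commutative-at-each-interval parts, so it respects inverses: since $P^{-1} = \widehat{Q}$ in $\mathcal{I}$ by the recapitulation in Section~\ref{sec:recap-kl}, the inverse change of basis has $[G,F]$-entry $x^{\rk(F)-\rk(G)} \widehat{Q}_{\M|_F/G}(x^{-2})$, which is exactly the claimed formula.

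Concretely, here is how I would carry it out. First, let $A, B \in \mathcal{I}$ be defined by $A_{GF} = x^{\rk(F)-\rk(G)} P_{\M|_F/G}(x^{-2})$ and $B_{GF} = x^{\rk(F)-\rk(G)} \widehat{Q}_{\M|_F/G}(x^{-2})$ for $G \subseteq F$. Then \eqref{eq:zeta_definition} reads $\zeta^F_{\M} = \sum_{G} A_{GF} \Gamma^G_{\M}$, and the assertion of the lemma is $\Gamma^F_{\M} = \sum_G B_{GF} \zeta^G_{\M}$. Substituting one into the other, this amounts to proving $\sum_{G:\, H \subseteq G \subseteq F} B_{GF} A_{HG} = \delta_{HF}$ for all $H \subseteq F$ — i.e. $BA = \delta$ in $\mathcal{I}$ (reading the product with the convolution of the excerpt, transposed appropriately). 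Since $\mathcal{I}$ has two-sided inverses for invertible elements (property (iii) of the incidence algebra, as $A_{FF} = \widehat{Q}_{\M|_F/F} = 1$ is a unit), it suffices to check one side. Expanding $\sum_G B_{GF} A_{HG}$: the powers of $x$ telescope to $x^{\rk(F)-\rk(H)}$, and the remaining factor is $\sum_{G:\,H\subseteq G\subseteq F} \widehat{Q}_{\M|_F/G}(x^{-2})\, P_{\M|_G/H}(x^{-2})$. Now $\M|_G/H = (\M|_F/H)|_{G/H}$ and $\M|_F/G = (\M|_F/H)/(G/H)$, and as $G$ ranges over flats between $H$ and $F$, the quotient $G/H$ ranges over all flats of the matroid $\N := \M|_F/H$. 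Hence the sum equals $\sum_{G' \in \mathcal{L}(\N)} \widehat{Q}_{\N/G'}(y)\, P_{\N|_{G'}}(y)$ with $y = x^{-2}$, which is precisely the $[\varnothing, E(\N)]$-entry of the product $\widehat{Q}\cdot P$ in the incidence algebra of $\mathcal{L}(\N)$. Since $P^{-1} = \widehat{Q}$, this entry is $\delta_{\varnothing, E(\N)}$, which is $1$ exactly when $H = F$ and $0$ otherwise. This gives $\delta_{HF}$ as required.

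The main obstacle, such as it is, is bookkeeping rather than conceptual: one must be careful about the direction of convolution versus matrix multiplication (the change-of-basis matrix is "transposed" relative to the convolution product, so $\zeta = A^{\mathrm{t}} \Gamma$ and the inverse involves $(A^{-1})^{\mathrm{t}}$), and one must correctly identify $\sum_{H \subseteq G \subseteq F}$ with the incidence-algebra convolution for the interval $[H,F]$ in $\mathcal{L}(\M)$, which is valid because intervals in the lattice of flats are again lattices of flats of minors. Once the identification $\sum_{G'\in\mathcal{L}(\N)} \widehat{Q}_{\N/G'}\,P_{\N|_{G'}} = (\widehat{Q}P)_{\varnothing E(\N)}$ is in place, the relation $P^{-1}=\widehat Q$ from Section~\ref{sec:recap-kl} closes the argument immediately. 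I would also remark that an alternative, slightly more pedestrian route avoids naming $\mathcal{I}$ twice: simply substitute \eqref{eq:zeta_definition} into the right-hand side of \eqref{cob-lemma}, swap the order of summation, and recognize the inner sum as the defining relation $\sum_{F} \widehat{Q}_{\M/F}(x)P_{\M|_F}(x) = \delta_{\rk(\M),0}$ of the inverse Kazhdan--Lusztig polynomial; but the incidence-algebra phrasing makes the uniqueness (hence the "for every flat $F$" quantifier) transparent.
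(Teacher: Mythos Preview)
Your proposal is correct and follows essentially the same approach as the paper: both arguments observe that the change-of-basis matrices in \eqref{eq:zeta_definition} and \eqref{cob-lemma} are mutual inverses precisely because $P$ and $\widehat{Q}$ are inverses in the incidence algebra $\mathcal{I}$. Your write-up is more explicit about the bookkeeping (telescoping the powers of $x$, identifying the minor $\N = \M|_F/H$), whereas the paper compresses this into a single displayed identity and a one-line remark; note also a harmless slip in your labeling---your sum $\sum_{G'} \widehat{Q}_{\N/G'}\,P_{\N|_{G'}}$ is the $[\varnothing,E(\N)]$-entry of $P\widehat{Q}$ rather than $\widehat{Q}P$, but both equal $\delta$ so the conclusion stands.
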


\begin{proof}
    Since $P$ and $\widehat{Q}$ are inverses in $\mathcal{I}$, we have
    \[
        \sum_{\substack{G, H \in \mathcal{L}(\M)\\ H \subseteq G \subseteq F}}{P_{\M|_F/G}(x)\cdot \widehat{Q}_{\M|_G/H}(x)} =
        \sum_{\substack{G, H \in \mathcal{L}(\M)\\ H \subseteq G \subseteq F}}{\widehat{Q}_{\M|_F/G}(x)\cdot P_{\M|_G/H}(x)} =
        \delta_{FH},
    \]
    which is $1$ when $F=H$ and $0$ otherwise. Using this fact, we note that the change-of-basis matrices defined by equations \eqref{eq:zeta_definition} and
    \eqref{cob-lemma} are inverses of each other.
\end{proof}

\subsection{A homomorphism for single element deletions}\label{sec:delta}

We start by recalling a few elementary facts observed in \cite[Section~2.6]{braden-vysogorets}. 
Whenever $i$ is an element of the ground set of $\M$ that is not a coloop, there is a surjective map $\cL(\M) \to \cL(\M\smallsetminus i)$ sending a flat $F \in \cL(\M)$ to $F\smallsetminus i \in \cL(\M\smallsetminus i)$. This map induces a $\mathbb{Z}[x,x^{-1}]$-linear map $\Delta\colon \cH(\M) \to \cH(\M\smallsetminus i)$, which is defined on the basis elements $\Gamma^F_{\M} \in \cH(\M)$ by \[\Delta(\Gamma^F_{\M}):= x^{\rk_{\M\smallsetminus i}(F\smallsetminus i)- \rk_\M(F)} \cdot \Gamma^{F\smallsetminus i}_{\M\smallsetminus i}.\] Written more explicitly:

\begin{equation}
    \Delta(\Gamma^F_{\M})  = \begin{cases}
        \Gamma^F_{\M\smallsetminus i}                          & \text{if }F\not\ni i,                                      \\
        \Gamma^{F\smallsetminus i}_{\M\smallsetminus i}               & \text{if } F\ni i \text{ and }F\smallsetminus i \notin \cL(\M), \\
        x^{-1}\cdot \Gamma^{F\smallsetminus i}_{\M\smallsetminus i}  & \text{if } F\ni i \text{ and }F\smallsetminus i \in \cL(\M).
    \end{cases}
\end{equation}

We now state and prove a crucial lemma, which is central to the proof of Theorem~\ref{thm:deletion_formula_Q}. It describes the behavior of $\Delta$ on the Kazhdan--Lusztig basis.

\begin{lemma}
    \label{lem:delta_zeta_nocoloop}
    Let $\M$ be a matroid on $E$, and let $i\in E$ be an element that is not a coloop. 
    \begin{enumerate}[\normalfont (i)]
        \item If $F \in \cL(\M)$ is a flat such that $i \in F$ and $F\smallsetminus i \notin \cL(\M)$, then
    \begin{equation}
        \Delta(\zeta_{\M}^F) = \zeta^{F\smallsetminus i}_{\M\smallsetminus i} + \sum_{G\in \mathscr{S}_i(\M|_F)}\tau(\M|_F/(G\cup i))\cdot\zeta^G_{\M\smallsetminus i},
    \end{equation}
    where $\mathscr{S}_i(\M|_F) = \{G\in \cL(\M|_F) : i\notin G \text{ and } G\cup i \in \cL(\M|_F)\}$.
    \item \label{lem:delta_zeta_special_case} If $F \in \cL(\M)$ is a flat such that $i \in F$ and $F\smallsetminus i \in \cL(\M)$, then
    \[ \Delta(\zeta^F_{\M}) = (x + x^{-1})\zeta_{\M\smallsetminus i}^{F\smallsetminus i} .\]
    \end{enumerate}
\end{lemma}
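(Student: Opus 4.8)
Both parts describe $\Delta(\zeta^F_{\M})$ for a flat $F$ containing $i$; the plan is to expand $\zeta^F_{\M}$ in the standard basis via equation~\eqref{eq:zeta_definition}, apply $\Delta$ termwise using the explicit three-case formula, collect the resulting terms on the standard basis of $\cH(\M\smallsetminus i)$, and then recognize the answer by comparing with the standard-basis expansion of the claimed right-hand side. The key structural observation is that the flats $G\subseteq F$ in $\cL(\M)$ split into three types according to whether $i\notin G$, or $i\in G$ with $G\smallsetminus i\notin\cL(\M)$, or $i\in G$ with $G\smallsetminus i\in\cL(\M)$; the map $\Delta$ sends the first type to $\Gamma^G_{\M\smallsetminus i}$, and the latter two to $\Gamma^{G\smallsetminus i}_{\M\smallsetminus i}$ (with an extra $x^{-1}$ in the third). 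So $\Delta(\zeta^F_{\M})$ becomes a sum indexed by flats $H\subseteq F\smallsetminus i$ of $\M\smallsetminus i$, where the coefficient of $\Gamma^H_{\M\smallsetminus i}$ gathers contributions from the flat $H$ itself (if it is a flat of $\M$), from $H\cup i$ (if that is a flat of $\M$ with $(H\cup i)\smallsetminus i=H\notin\cL(\M)$, which only happens in case~(i)), and similar bookkeeping.

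**Part (ii).** Here $F\smallsetminus i\in\cL(\M)$, so $\M|_F$ has $i$ as a non-coloop and every flat $H$ of $\M\smallsetminus i$ below $F\smallsetminus i$ gives rise to exactly two flats of $\M|_F$, namely $H$ and $H\cup i$, with $\rk_{\M}(H\cup i)=\rk_{\M}(H)+1$ would be wrong — rather $H\cup i$ and $H$ have the same rank in $\M$ when $H\cup i\in\cL(\M)$; the preimages under $\cL(\M|_F)\to\cL(\M\smallsetminus i)$ of a given $H\subseteq F\smallsetminus i$ are the flats $H$ and $H\cup i$ of $\M|_F$ (when both are flats), or just one of them. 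The cleanest route is to invoke that $\M|_F$ is itself a matroid with $i$ a non-coloop, so the restriction of $\Delta$ to $\cH(\M|_F)\to\cH((\M\smallsetminus i)|_{F\smallsetminus i})$ is an instance of the same map, and Braden--Vysogorets already recorded the identity $\Delta(\zeta^F_{\M}) = (x+x^{-1})\,\zeta^{F\smallsetminus i}_{\M\smallsetminus i}$ in this situation (it is \cite[Lemma~2.20 or similar]{braden-vysogorets}); alternatively, reprove it directly by noting $P_{\M|_F/G}$ and $P_{\M|_F/(G\cup i)}$ coincide when $G\cup i\in\cL(\M|_F)$, since $\M|_F/G$ then has $i$ as a loop-or-nothing... — concretely, when $F\smallsetminus i\in\cL(\M)$ one has $\M|_F/H \cong (\M\smallsetminus i)|_{F\smallsetminus i}/H$ up to a free coloop or parallel element at $i$, so the Kazhdan--Lusztig polynomials match and the two terms $\Gamma^H_{\M\smallsetminus i}$ and $x^{-1}\Gamma^H_{\M\smallsetminus i}$ combine after the $x$-power bookkeeping into $(x^{-1}+x^{-(-1)})\cdot(\text{coeff})$, yielding the factor $x+x^{-1}$.

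**Part (i): the main point.** Now $F\smallsetminus i\notin\cL(\M)$, i.e.\ $i$ behaves like a coloop in $\M|_F$. Expanding $\zeta^F_{\M}=\sum_{G\subseteq F} x^{\rk(F)-\rk(G)}P_{\M|_F/G}(x^{-2})\,\Gamma^G_{\M}$ and applying $\Delta$, the terms with $i\notin G$ give $x^{\rk(F)-\rk(G)}P_{\M|_F/G}(x^{-2})\,\Gamma^G_{\M\smallsetminus i}$, and these should assemble (together with some correction terms) into $\zeta^{F\smallsetminus i}_{\M\smallsetminus i}=\sum_{H\subseteq F\smallsetminus i}x^{\rk_{\M\smallsetminus i}(F\smallsetminus i)-\rk(H)}P_{(\M\smallsetminus i)|_{F\smallsetminus i}/H}(x^{-2})\Gamma^H_{\M\smallsetminus i}$. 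The deletion formula of Braden--Vysogorets (Theorem~\ref{thm:bv}) applied to $\M|_F$ with the element $i$ — note $\rk_{\M\smallsetminus i}(F\smallsetminus i)=\rk_{\M}(F)$ since $i$ is a coloop of $\M|_F$, and $\M|_F/i$ has rank $\rk(F)-1$ — expresses $P_{\M|_F}$ in terms of $P_{(\M|_F)\smallsetminus i}=P_{(\M\smallsetminus i)|_{F\smallsetminus i}}$, $P_{\M|_F/i}$, and the $\mathscr{S}_i(\M|_F)$-sum, and analogous identities hold for the contractions $\M|_F/G$. The plan is to substitute these into the standard-basis expansion, so that the coefficient of each $\Gamma^H_{\M\smallsetminus i}$ splits as (the $\zeta^{F\smallsetminus i}_{\M\smallsetminus i}$-coefficient) $+$ (a term coming from $-xP_{\M|_F/i}$, which should cancel against the $i\in G$, $G\smallsetminus i\in\cL(\M)$ terms with their extra $x^{-1}$) $+$ ($\mathscr{S}_i$-corrections, which should reorganize into $\sum_{G'\in\mathscr{S}_i(\M|_F)}\tau(\M|_F/(G'\cup i))\,\zeta^{G'}_{\M\smallsetminus i}$ after re-expanding $P_{\M|_{G'}}(x^{-2})$ into the $\zeta^G_{\M\smallsetminus i}$ basis). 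The hard part will be this last regrouping: one must verify that after using Theorem~\ref{thm:bv} for every contraction $\M|_F/G$ with $G\subseteq F$, $i\notin G$, the doubly-indexed correction terms telescope so that summing over $G$ produces exactly one clean $\zeta^{G'}_{\M\smallsetminus i}$ per $G'\in\mathscr{S}_i(\M|_F)$ with coefficient $\tau(\M|_F/(G'\cup i))$; checking the exponents of $x$ match (using $\rk(F)-\rk(G)$ versus $\tfrac{\rk(F)-\rk(G')}{2}$ from the $\tau$-term and the $x^{-2}$-substitution inside $P$) and that nothing is over- or under-counted is the real content, and is where I would spend most of the effort.
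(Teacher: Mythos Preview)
Your diagnosis of which case is already in \cite{braden-vysogorets} and which is new is reversed, because you have the coloop status backwards. When $F\smallsetminus i\in\cL(\M)$ (case (ii)), both $F$ and $F\smallsetminus i$ are flats with $F\smallsetminus i\subsetneq F$, so $\rk(F\smallsetminus i)=\rk(F)-1$ and $i$ \emph{is} a coloop of $\M|_F$. When $F\smallsetminus i\notin\cL(\M)$ (case (i)), the closure of $F\smallsetminus i$ inside $F$ is $F$ itself, so $\rk(F\smallsetminus i)=\rk(F)$ and $i$ is \emph{not} a coloop of $\M|_F$. Thus case (i) is precisely the situation where $\M|_F$ meets the hypotheses of Braden--Vysogorets, and the paper simply cites their Equation~(12); your proposed derivation of (i) via Theorem~\ref{thm:bv} is essentially re-running their original argument. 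It is case (ii) that requires a new proof here, not the other way around.

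For (ii) your direct-computation idea is indeed the paper's approach, but the bookkeeping you sketch is off. Your self-correction ``$H\cup i$ and $H$ have the same rank when $H\cup i\in\cL(\M)$'' is false: if both are flats and $H\subsetneq H\cup i$, their ranks differ by exactly one. The clean structural fact is this. Write $F_0=F\smallsetminus i$. Because $F_0$ is a flat of $\M$, for any flat $G\subseteq F$ with $i\in G$ the intersection $G_0:=G\cap F_0=G\smallsetminus i$ is again a flat, so \emph{every} such $G$ falls into the third case of $\Delta$ and $\Delta(\Gamma^G_\M)=x^{-1}\Gamma^{G_0}_{\M\smallsetminus i}$. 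Hence the flats $G\subseteq F$ split as $\{G:G\subseteq F_0\}\sqcup\{G_0\cup i:G_0\subseteq F_0\}$, a two-to-one map onto the flats of $(\M\smallsetminus i)|_{F_0}$. Since $i$ is a coloop of $\M|_F$, one has $\M|_F/G\cong(\M\smallsetminus i)|_{F_0}/G\oplus\U_{1,1}$ and $\M|_F/(G_0\cup i)\cong(\M\smallsetminus i)|_{F_0}/G_0$, so in both cases the Kazhdan--Lusztig polynomial is $P_{(\M\smallsetminus i)|_{F_0}/G}$. Tracking the $x$-powers (using $\rk_\M(F)=\rk_{\M\smallsetminus i}(F_0)+1$ and $\rk_\M(G_0\cup i)=\rk_{\M\smallsetminus i}(G_0)+1$) makes the two halves of the sum equal $x\cdot\zeta^{F_0}_{\M\smallsetminus i}$ and $x^{-1}\cdot\zeta^{F_0}_{\M\smallsetminus i}$ respectively, which add to $(x+x^{-1})\zeta^{F_0}_{\M\smallsetminus i}$.
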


\begin{proof}
    The first of the two cases was already proved by Braden and Vysogorets and appears as \cite[Equation~(12)]{braden-vysogorets}. 
    
    We focus on the second case. Let us denote $F_0 = F\smallsetminus i$. We are assuming that $F_0$ is a flat of $\M$.
    Using the definition of $\Delta$, we analyze the possible values of $\Delta(\Gamma_{\M}^G)$ for $G\subseteq F$ a flat of $\M$:
    \begin{itemize}
        \item If $G \subseteq F$ and $i \notin G$, then $G \subseteq F_0 \in \cL(\M\smallsetminus i)$. Since $G \in \cL(\M)$ and $i \notin G$, it follows that $G$ is also a flat of $\M\smallsetminus i$. In this case, $\Delta(\Gamma_{\M}^G) = \Gamma_{\M\smallsetminus i}^G$.
        \item If $G \subseteq F$ and $i \in G$, let $G_0 = G\smallsetminus i$. Then $G_0 \subseteq F_0 \in \cL(\M\smallsetminus i)$. Since $F_0 \in \cL(\M)$, it follows that $G_0 = G \cap F_0 \in \cL(\M)$. In this case, $\Delta(\Gamma_{\M}^G) = x^{-1}\Gamma_{\M\smallsetminus i}^{G_0}$.
    \end{itemize}
    We apply these in the following computation of $\Delta(\zeta_{\M}^F)$:
    \begin{align}
    \Delta(\zeta_{\M}^F) &= \Delta \left( \sum_{\substack{G \in \mathcal{L}(\M)\\ G \subseteq F}} x^{\rk(F) - \rk(G)} P_{\M|_F/G}(x^{-2}) \cdot \Gamma_{\M}^G\right) \nonumber \\
    &= \sum_{\substack{G \in \mathcal{L}(\M)\\ G \subseteq F\\i\notin G}}  x^{\rk(F) - \rk(G)} P_{\M|_F/G}(x^{-2}) \cdot \Delta(\Gamma_{\M}^G) +\sum_{\substack{G \in \mathcal{L}(\M)\\ G \subseteq F\\i\in G}}  x^{\rk(F) - \rk(G)} P_{\M|_F/G}(x^{-2}) \cdot \Delta(\Gamma_{\M}^G)  \nonumber\\
    &= \sum_{\substack{G\in \mathcal{L}(\M)\\G \subseteq F_0}} x^{\rk(F) - \rk(G)} P_{\M|_F/G}(x^{-2}) \cdot \Gamma_{\M\smallsetminus i}^G \nonumber \\
    &\qquad\qquad + \sum_{\substack{G_0\in \mathcal{L}(\M)\\G_0 \subseteq F_0}} x^{\rk(F) - \rk(G_0 \cup \{i\})} P_{\M|_F/(G_0 \cup \{i\})}(x^{-2}) \cdot (x^{-1}\Gamma_{\M\smallsetminus i}^{G_0}).\label{eq:two-summands}
    \end{align}
    In the last equation, every appearance of the rank function is as the rank function of $\M$. We can relate this to the rank function of $\M\smallsetminus i$ as follows:
    \begin{itemize}
        \item $\rk_{\M}(F) = \rk_{\M\smallsetminus i}(F_0) + 1$.
        \item For $G \subseteq F_0$, we have $\rk_{\M}(G) = \rk_{\M\smallsetminus i}(G)$.
        \item For $G_0 \subseteq F_0$, we have $\rk_{\M}(G_0 \cup \{i\}) = \rk_{\M\smallsetminus i}(G_0) + 1$.
    \end{itemize}
    We also have Kazhdan--Lusztig polynomial identities under these conditions:
    \begin{itemize}
        \item In the case in which $G \subseteq F_0$, we have an isomorphism $\M|_F/G \cong (\M\smallsetminus i)|_{F_0}/G \oplus \U_{1,1}$, and therefore we obtain $P_{\M|_F/G}(x^{-2}) = P_{(\M\smallsetminus i)|_{F_0}/G}(x^{-2})$.
        \item In the case in which $G_0 \subseteq F_0$, we have an isomorphism $\M|_F/(G_0 \cup \{i\}) \cong (\M\smallsetminus i)|_{F_0}/{G_0}$, and therefore $P_{\M|_F/(G_0 \cup \{i\})}(x^{-2}) = P_{(\M\smallsetminus i)|_{F_0}/G_0}(x^{-2})$.
    \end{itemize}
    Substituting these identities into the two summands in equation~\eqref{eq:two-summands}, we have
    \begin{align*}
    \Delta(\zeta_{\M}^F) &= \sum_{\substack{G\in \mathcal{L}(\M\smallsetminus i)\\G \subseteq F_0}} x^{(\rk_{\M\smallsetminus i}(F_0) + 1) - \rk_{\M\smallsetminus i}(G)} P_{(\M\smallsetminus i)|_{F_0}/G}(x^{-2}) \cdot \Gamma_{\M\smallsetminus i}^G \\
    &\qquad\qquad + x^{-1} \sum_{\substack{G_0\in \mathcal{L}(\M\smallsetminus i)\\G_0 \subseteq F_0}} x^{(\rk_{\M\smallsetminus i}(F_0) + 1) - (\rk_{\M\smallsetminus i}(G_0) + 1)} P_{(\M\smallsetminus i)|_{F_0}/G_0}(x^{-2}) \cdot \Gamma_{\M\smallsetminus i}^{G_0} \\
    &= x \sum_{\substack{G\in \mathcal{L}(\M\smallsetminus i)\\G \subseteq F_0}} x^{\rk_{\M\smallsetminus i}(F_0) - \rk_{\M\smallsetminus i}(G)} P_{(\M\smallsetminus i)|_{F_0}/G}(x^{-2}) \cdot \Gamma_{\M\smallsetminus i}^G \\
    &\qquad\qquad + x^{-1} \sum_{\substack{G_0\in \mathcal{L}(\M\smallsetminus i)\\G_0 \subseteq F_0}} x^{\rk_{\M\smallsetminus i}(F_0) - \rk_{\M\smallsetminus i}(G_0)} P_{(\M\smallsetminus i)|_{F_0}/G_0}(x^{-2}) \cdot \Gamma_{\M\smallsetminus i}^{G_0}.
    \end{align*}
    Note that the two sums appearing in the last equation are equal. Furthermore, they agree with the definition of $\zeta_{\M\smallsetminus i}^{F_0} = \zeta_{\M\smallsetminus i}^{F\smallsetminus i}$. This completes the proof.
\end{proof}

\subsection{The deletion formula for inverse KL polynomials}

We have all the necessary ingredients to prove the first part of Theorem~\ref{thm:deletion_formula_Q}. 

\newtheorem*{thm:intro1}{Theorem~\ref{thm:deletion_formula_Q} (first part)}
\begin{thm:intro1}
    Let $\M$ be a matroid on $E$, and let $i\in E$ be an element that is not a coloop. The following identity holds:
    \[ Q_{\M}(x) = Q_{\M\smallsetminus i}(x) + (1+x) Q_{\M/i}(x) - \sum_{F\in \mathcal{T}_i} \tau(\M|_F/i)\, x^{\rk(F)/2}\, Q_{\M/F}(x).\]
\end{thm:intro1}

\begin{proof}
    We use Lemma~\ref{lem:standard_basis_in_zeta} for $F = E$ the full ground set, and obtain
    \[ \Gamma_{\M}^E = \sum_{G\in \mathcal{L}(\M)} x^{\rk(E) - \rk(G)} \widehat{Q}_{\M/G}(x^{-2})\, \zeta^G_{\M}.\]
    Applying the homomorphism $\Delta$ yields
    \begin{equation}\label{eq:two-sides}
    \Gamma_{\M\smallsetminus i}^{E\smallsetminus i} = \sum_{G\in\mathcal{L}(\M)} x^{\rk(E) - \rk(G)} \widehat{Q}_{\M/G}(x^{-2}) \cdot \Delta(\zeta_{\M}^G).
    \end{equation}
    By writing both sides of equation \eqref{eq:two-sides} in the Kazhdan--Lusztig basis, we may equate the resulting coefficients of $\zeta_{\M\smallsetminus i}^{\varnothing}$. On the left-hand side we apply Lemma~\ref{lem:standard_basis_in_zeta} again, but now for the matroid $\M\smallsetminus i$ and the flat $E\smallsetminus i$. We obtain that the coefficient of $\zeta_{\M\smallsetminus i}^\varnothing$ on the left-hand side equals
    $x^{\rk_{\M\smallsetminus i}(E\smallsetminus i) - \rk_{\M\smallsetminus i}(\varnothing)}\cdot
        \widehat{Q}_{\M|_{E\smallsetminus i}}(x^{-2}) = x^{\rk(\M)}\cdot
        \widehat{Q}_{\M\smallsetminus i}(x^{-2})$.

    To compute the coefficient of $\zeta^{\varnothing}_{\M\smallsetminus i}$ on the right-hand side of equation~\eqref{eq:two-sides}, we employ Lemma~\ref{lem:delta_zeta_nocoloop}. Examining the summand for $G\in \mathscr{L}(\M)$, this coefficient is non-zero only when $G\in \{\varnothing, i\} \cup \mathscr{T}_i$. In each of these cases, we have the following:
    \begin{itemize}
        \item For $G = \varnothing$, we have $\Delta(\zeta_{\M}^\varnothing) = \zeta_{\M\smallsetminus i}^\varnothing$.
        \item For $G = i$, we have $\Delta(\zeta_{\M}^i) = (x+x^{-1})\zeta_{\M\smallsetminus i}^\varnothing$.
        \item For $G\in \mathscr{T}_i$, we have $\Delta(\zeta_{\M}^G) = \tau(\M|_G/i)\cdot\zeta_{\M\smallsetminus i}^\varnothing
                  +(\text{terms not including $\zeta_{\M\smallsetminus i}^\varnothing$})$.
    \end{itemize}

    Collecting all these terms, the coefficient of $\zeta_{\M\smallsetminus i}^{\varnothing}$ on the right-hand side of equation~\eqref{eq:two-sides} equals
    \begin{equation}
         x^{\rk(\M)}
        \widehat{Q}_{\M}(x^{-2}) + x^{\rk(\M)- 1}(x+x^{-1})
        \widehat{Q}_{\M/i}(x^{-2})          +
         \sum_{G\in \mathscr{T}_i} x^{\rk(\M) - \rk(G)}
        \widehat{Q}_{\M/G}(x^{-2})\cdot \tau(\M|_G/i).
    \end{equation}
    Now, our result follows by equating this with the computation we made of this coefficient on the left-hand side of equation \eqref{eq:two-sides}:
    \begin{align*}x^{\rk(\M)}\cdot
        \widehat{Q}_{\M\smallsetminus i}(x^{-2}) &= x^{\rk(\M)}
        \widehat{Q}_{\M}(x^{-2}) + x^{\rk(\M)- 1}(x+x^{-1})
        \widehat{Q}_{\M/i}(x^{-2})+\\
        &\qquad\qquad
         \sum_{G\in \mathscr{T}_i} x^{\rk(\M) - \rk(G)}
        \widehat{Q}_{\M/G}(x^{-2})\cdot \tau(\M|_G/i).
    \end{align*}
    By rearranging terms, applying the substitution $\widehat{Q}_{\M}(x) = (-1)^{\rk(\M)} Q_{\M}(x)$, and using the change of variables $x^{-2} = t$, we are led to the formula we desired.
\end{proof}

\subsection{Deletion formula for the inverse \texorpdfstring{$Z$}{Z}-polynomial}
In order to prove the deletion formula for $Y_{\M}(x)$, we consider the related polynomial $\widehat{Y}_{\M}(x)$ given by
\[
    \widehat{Y}_{\M}(x) := (-1)^{\rk(\M)}\, Y_{\M}(x) = \sum_{F \in \mathcal{L}(\M)} \left( x^{\rk(\M/F)}\mu(\M/F) \right) \cdot \widehat{Q}_{\M|_F}(x).
\]
We define a new family of elements in $\mathcal{H}(\M)$:
\[\xi_{\M}^F := \sum_{\substack{G\in\mathcal{L}(\M)\\G\subseteq F}} x^{\rk(G)}\mu(\M|_F/G)\,\cdot \Gamma_{\M}^G.\] 

Because the M\"obius invariant of the rank zero matroid is $1$, these elements form a new $\Z[x,x^{-1}]$-basis of 
$\cH(\M)$. The next lemma shows how the deletion map $\Delta$ interacts with these elements.

\begin{lemma}\label{lem:deletion-xi}
   Let $\M$ be a matroid on $E$, and let $i\in E$ be an element that is not a coloop of $\M$. Then,
    \[
        \Delta(\xi_{\M}^E) = \xi_{\M\smallsetminus i}^{E\smallsetminus i}.
    \]
\end{lemma}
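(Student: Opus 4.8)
The plan is to compute $\Delta(\xi_\M^E)$ directly, regroup the resulting sum according to the image of each flat under the map $\cL(\M)\to\cL(\M\smallsetminus i)$, and reduce the lemma to a single identity about M\"obius invariants of contractions. Recall that $\Delta(\Gamma_\M^G)=x^{\rk_{\M\smallsetminus i}(G\smallsetminus i)-\rk_\M(G)}\cdot\Gamma_{\M\smallsetminus i}^{G\smallsetminus i}$ for every $G\in\cL(\M)$. Substituting this into $\xi_\M^E=\sum_{G\in\cL(\M)}x^{\rk_\M(G)}\mu(\M/G)\cdot\Gamma_\M^G$ and collecting terms with a common value $G':=G\smallsetminus i\in\cL(\M\smallsetminus i)$, the exponents of $x$ combine to $x^{\rk_{\M\smallsetminus i}(G')}$, independently of which $G$ in the fiber is used, so that
\[
\Delta(\xi_\M^E)=\sum_{G'\in\cL(\M\smallsetminus i)}x^{\rk_{\M\smallsetminus i}(G')}\Bigg(\sum_{\substack{G\in\cL(\M)\\ G\smallsetminus i=G'}}\mu(\M/G)\Bigg)\cdot\Gamma_{\M\smallsetminus i}^{G'}.
\]
Since $\xi_{\M\smallsetminus i}^{E\smallsetminus i}=\sum_{G'\in\cL(\M\smallsetminus i)}x^{\rk_{\M\smallsetminus i}(G')}\mu((\M\smallsetminus i)/G')\cdot\Gamma_{\M\smallsetminus i}^{G'}$, the lemma is equivalent to
\[
\sum_{\substack{G\in\cL(\M)\\ G\smallsetminus i=G'}}\mu(\M/G)=\mu((\M\smallsetminus i)/G')\qquad\text{for every }G'\in\cL(\M\smallsetminus i).
\]

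To prove this I would first describe the fiber over $G'$. Using the standard identity $\cl_{\M\smallsetminus i}(A)=\cl_\M(A)\smallsetminus i$, one sees that $\cl_\M(G')$ always lies in the fiber, and any flat in the fiber is one of $G'$ or $G'\cup i$. This leaves three cases: \textbf{(a)} $G'\in\cL(\M)$ but $G'\cup i\notin\cL(\M)$, with fiber $\{G'\}$; \textbf{(b)} $G'\in\cL(\M)$ and $G'\cup i\in\cL(\M)$, with fiber $\{G',\,G'\cup i\}$; \textbf{(c)} $G'\notin\cL(\M)$, which by the identity above forces $\cl_\M(G')=G'\cup i$, with fiber $\{G'\cup i\}$.

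The M\"obius identity is then verified case by case, using two ingredients. The first is the deletion–contraction recursion $\chi_\N(t)=\chi_{\N\smallsetminus e}(t)-\chi_{\N/e}(t)$, valid for a loopless matroid $\N$ and an element $e$ that is not a coloop of $\N$, where $\chi_{\N/e}$ is the zero polynomial exactly when $\{e\}$ is not a flat of $\N$ (equivalently, when $\N/e$ has a loop); evaluating at $t=0$ and recalling $\mu(\cdot)=\chi_{\cdot}(0)$ for loopless matroids gives $\mu(\N\smallsetminus e)=\mu(\N)+\mu(\N/e)$, with the last term absent in the loop case. The second consists of the minor identities $(\M\smallsetminus i)/G'=(\M/G')\smallsetminus i$ and, when $i\in\cl_\M(G')$, $(\M\smallsetminus i)/G'=\M/(G'\cup i)$, both checked on rank functions. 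In cases \textbf{(a)} and \textbf{(b)}, $G'$ is a flat of $\M$, so $\N:=\M/G'$ is loopless and $i$ is a non-coloop of it; moreover $\cl_{\M/G'}(\{i\})=\cl_\M(G'\cup i)\smallsetminus G'$, which equals $\{i\}$ exactly in case \textbf{(b)}. Hence in case \textbf{(b)}, $\N/i=\M/(G'\cup i)$ is loopless and the recursion yields $\mu((\M\smallsetminus i)/G')=\mu(\M/G')+\mu(\M/(G'\cup i))$, which is the two-element fiber sum; in case \textbf{(a)}, $\{i\}$ is not a flat of $\N$, so $\chi_{\N/i}=0$ and the recursion gives $\mu((\M\smallsetminus i)/G')=\mu(\M/G')$, matching the singleton fiber. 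In case \textbf{(c)}, the fiber is the single flat $G'\cup i$ and the required identity $\mu(\M/(G'\cup i))=\mu((\M\smallsetminus i)/G')$ is precisely the second minor identity.

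The only slightly delicate point is the bookkeeping with loops in case \textbf{(a)}: one must confirm that no flat of $\M$ other than $G'$ maps to $G'$ and that the contraction $\N/i$ genuinely acquires a loop, so that there is no $\chi_{\N/i}$ contribution to the recursion. Everything else is routine manipulation with rank functions and the two standard minor identities, so I do not anticipate a serious obstacle.
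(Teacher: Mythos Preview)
Your proof is correct and follows essentially the same approach as the paper: both compute $\Delta(\xi_\M^E)$ directly, split according to how flats relate to $i$, and reduce to the deletion--contraction recursion for the M\"obius invariant together with standard minor identifications. Your organization by fibers of the map $\cL(\M)\to\cL(\M\smallsetminus i)$ into three cases (a)--(c) is equivalent to the paper's partition of $\cL(\M)$ into four subsets $A,S,T,B$ (your case (b) handles $S$ and $T$ simultaneously), and the key identities invoked are the same.
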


\begin{proof}
    Since $\Delta$ is $\mathbb{Z}[x,x^{-1}]$-linear, we have
    \[
        \Delta(\xi_{\M}^E) = \sum_{G\in \mathcal{L}(\M)} x^{\rk(G)}\mu(\M/G)\cdot \Delta(\Gamma_{\M}^G).
    \]
    We partition the lattice of flats $\cL(\M)$ into four disjoint sets: 
    \begin{itemize}
        \item $A = \{G\in \cL(\M): i \notin G \text{ and } G\cup i \notin \cL(\M)\}$.
        \item $S = \{G\in \cL(\M): i \notin G \text{ and } G\cup i \in \cL(\M)\}$.
        \item $T = \{G\in \cL(\M): i \in G \text{ and } G\smallsetminus i \in \cL(\M)\}$.
        \item $B = \{G\in \cL(\M): i \in G \text{ and } G\smallsetminus i \notin \cL(\M)\}$.
    \end{itemize}
    We can now rewrite the sum by splitting it over these four sets:
    \begin{multline*}
        \Delta(\xi^E_{\M})  = \sum_{G \in S} x^{\rk(G)}\mu(\M/G)\cdot \Delta(\Gamma_{\M}^G) + \sum_{G \in T} x^{\rk(G)}\mu(\M/G)\cdot \Delta(\Gamma_{\M}^G)       \\
                       \qquad+ \sum_{G \in A} x^{\rk(G)}\mu(\M/G)\cdot \Delta(\Gamma_{\M}^G) + \sum_{G \in B} x^{\rk(G)}\mu(\M/G)\cdot \Delta(\Gamma_{\M}^G).
    \end{multline*}
    Applying the definition of $\Delta$ to each sum, one obtains:
    \begin{itemize}
        \item For $G \in S \cup A$ (flats not containing $i$), we have $\Delta(\Gamma_{\M}^G) = \Gamma_{\M\smallsetminus i}^{G\smallsetminus i}$.
        \item For $G \in T$, we have $\Delta(\Gamma_{\M}^G) = x^{-1}\Gamma_{\M\smallsetminus i}^{G\smallsetminus i}$.
        \item For $G \in B$, we have $\Delta(\Gamma_{\M}^G) = \Gamma_{\M\smallsetminus i}^{G\smallsetminus i}$.
    \end{itemize}
    Substituting these into the expression above gives
    \begin{multline*}
        \Delta(\xi^E_{\M})  = \sum_{G \in S} x^{\rk(G)}\mu(\M/G)\cdot \Gamma^G_{\M\smallsetminus i} + \sum_{G \in T} x^{\rk(G)-1}\mu(\M/G)\cdot \Gamma_{\M\smallsetminus i}^{G\smallsetminus i}     \\
                      + \sum_{G \in A} x^{\rk(G)}\mu(\M/G)\cdot \Gamma^G_{\M\smallsetminus i} + \sum_{G \in B} x^{\rk(G)}\mu(\M/G)\cdot \Gamma_{\M\smallsetminus i}^{G\smallsetminus i}.
    \end{multline*}
    There is a natural bijection between $S$ and $T$ where for each $F \in S$, we have $F \cup i \in T$. We re-index the sum over $T$ using $F \in S$, which lets us combine the first two sums:
    \begin{multline*}
        \Delta(\xi^E_{\M}) = \sum_{F \in S} \left( x^{\rk(F)}\mu(\M/F) + x^{\rk(F \cup i)-1}\mu(\M/(F\cup i)) \right) \cdot \Gamma_{\M\smallsetminus i}^F            \\
                      + \sum_{G \in A} x^{\rk(G)}\mu(\M/G)\cdot \Gamma^G_{\M\smallsetminus i} + \sum_{G \in B} x^{\rk(G)}\mu(\M/G)\cdot \Gamma_{\M\smallsetminus i}^{G\smallsetminus i}.
    \end{multline*}
    For $F\in S$, note that $\rk(F \cup i) = \rk(F) + 1$. Further, since $i$ is not a loop nor a coloop of the contraction $\M/F$, we have the standard deletion-contraction relation for the M\"obius function:
    \[
        \mu(\M/F) + \mu(\M/(F\cup i)) = \mu((\M\smallsetminus i)/F).
    \]
    There are analogous relations for the flats in the other sets. For a flat $G \in A$, the interval $[G,E]$ in $\mathcal{L}(\M)$ is isomorphic to the interval $[G, E\smallsetminus i]$ in $\mathcal{L}(\M\smallsetminus i)$, which gives
    \[
        \mu(\M/G) = \mu((\M\smallsetminus i)/G).
    \]
    For a flat $G \in B$, the set $F = G\smallsetminus i$ is a flat in $\M\smallsetminus i$ but not in $\M$. 
    The intervals $[G, E]$ and $[G\smallsetminus i, E\smallsetminus i]$ are isomorphic, so the relationship between the Möbius functions in this case is
    \[
        \mu(\M/G) = \mu((\M\smallsetminus i)/(G\smallsetminus i)).
    \]
    Substituting these identities into the expression for $\Delta(\xi^E_\M)$ yields
    \begin{multline*}
        \Delta(\xi_{\M}^E)  = \sum_{F \in S} x^{\rk(F)}\mu((\M\smallsetminus i)/F)\cdot \Gamma_{\M\smallsetminus i}^F                                                                                                     
                      + \sum_{G \in A} x^{\rk(G)}\mu((\M\smallsetminus i)/G)\cdot \Gamma_{\M\smallsetminus i}^G \\ + \sum_{G \in B} x^{\rk(G)}\mu((\M\smallsetminus i)/(G\smallsetminus i))\cdot \Gamma^{G\smallsetminus i}_{\M\smallsetminus i}.
    \end{multline*}
    Since the terms on the right account exactly for all flats of the matroid $\M\smallsetminus i$, we obtain
    \[
        \Delta(\xi_{\M}^E) = \xi_{\M\smallsetminus i}^{E\smallsetminus i}. \qedhere
    \]
\end{proof}

We are now ready to prove the second part of Theorem~\ref{thm:deletion_formula_Q}.

\newtheorem*{thm:intro2}{Theorem~\ref{thm:deletion_formula_Q}~(second part)}
\begin{thm:intro2}
    Let $\M$ be a matroid on $E$, and let $i\in E$ be an element that is not a coloop. The following identity holds:
    \[ Y_{\M}(x) = Y_{\M\smallsetminus i}(x) + (1+x) Y_{\M/i}(x) - \sum_{F\in \mathcal{T}_i} \tau(\M|_F/i)\, x^{\rk(F)/2}\, Y_{\M/F}(x).\]
\end{thm:intro2}

\begin{proof}
By applying Lemma~\ref{lem:standard_basis_in_zeta} to the definition of the element $\xi^E_{\M}$, we can write it in terms of the Kazhdan--Lusztig basis elements:
\begin{align}
    \xi_{\M}^E & = \sum_{F \in \mathcal{L}(\M)} x^{\rk(F)}\mu(\M/F)\left( \sum_{\substack{G \in \mathcal{L}(\M)\\ G \subseteq F}} x^{\rk(F) - \rk(G)} \widehat{Q}_{\M|_F/G}(x^{-2}) \cdot \zeta^G_\M \right)         \nonumber\\
          & = \sum_{F \in \mathcal{L}(\M)} \sum_{\substack{G\in \mathcal{L}(\M)\\G\subseteq F}} x^{2\rk(F)-\rk(G)}\mu(\M/F)\cdot\widehat{Q}_{\M|_F/G}(x^{-2}) \cdot \zeta_{\M}^G                            \nonumber\\
          & = \sum_{G \in \mathcal{L}(\M)} \sum_{\substack{F\in \mathcal{L}(\M)\\F\supseteq G}} x^{2\rk(F)-\rk(G)}\mu(\M/F)\cdot\widehat{Q}_{\M|_F/G}(x^{-2}) \cdot \zeta_{\M}^G                            \nonumber\\
          & = \sum_{G \in \mathcal{L}(\M)} x^{2\rk(E)-\rk(G)}\left(\sum_{\substack{F \in \mathcal{L}(\M)\\ F\supseteq G} }x^{2\rk(F)-2\rk(E)}\mu(\M/F)\widehat{Q}_{\M|_F/G}(x^{-2})\right) \cdot \zeta_{\M}^G \nonumber\\
          & = \sum_{G \in \mathcal{L}(\M)} x^{2\rk(E)-\rk(G)}\widehat{Y}_{\M/G}(x^{-2}) \cdot \zeta_{\M}^G. \label{eq:xi-kl-basis}
\end{align}
Applying the deletion map $\Delta$ to this expression and using Lemma~\ref{lem:deletion-xi}, we get
\begin{equation}\label{eq:xi-deletion-2}
    \xi_{\M\smallsetminus i}^{E\smallsetminus i} = \Delta(\xi_{\M}^E)                      = \sum_{G \in \mathcal{L}(\M)} x^{2\rk(E)-\rk(G)}\widehat{Y}_{\M/G}(x^{-2}) \cdot \Delta(\zeta_{\M}^G).
\end{equation}
By applying equation~\eqref{eq:xi-kl-basis} to the matroid $\M\smallsetminus i$ with ground set $E\smallsetminus i$, we obtain that the coefficient of $\zeta_{\M\smallsetminus i}^\varnothing$ on the left-hand side of equation~\eqref{eq:xi-deletion-2} equals $x^{2\rk(E\smallsetminus i)}\widehat{Y}_{\M\smallsetminus i}(x^{-2})$.
Like in the proof of Theorem~\ref{thm:deletion_formula_Q} (first part), the coefficient of $\zeta_{\M\smallsetminus i}^\varnothing$ on the right-hand side of equation~\eqref{eq:xi-deletion-2} is
\[
    x^{2\rk(E)}\widehat{Y}_{\M}(x^{-2}) - (x+x^{-1})x^{2\rk(E)-1}\widehat{Y}_{\M/i}(x^{-2}) +
    \sum_{G \in S'} \tau(\M|_G/i)\cdot x^{2\rk(E)-\rk(G)}\widehat{Y}_{\M/G}(x^{-2}).
\]
Equating the coefficients of $\zeta_{\M\smallsetminus i}^\varnothing$ on both sides, we get
\[
    \widehat{Y}_{\M\smallsetminus i}(x^{-2}) = \widehat{Y}_{\M}(x^{-2}) - (1+x^{-2})\widehat{Y}_{\M/i}(x^{-2}) +
    \sum_{G \in S'} \tau(\M|_G/i)\cdot x^{-\rk(G)}\widehat{Y}_{\M/G}(x^{-2}).
\]
After applying the change of variables $t = x^{-2}$, rearranging terms, and using that $\widehat{Y}_{\M}(x) = (-1)^{\rk(\M)} Y_{\M}(x)$, the desired deletion formula follows.
\end{proof}

\section{Some special cases and examples}\label{sec:examples}

In order to illustrate its use, in this section we apply Theorem~\ref{thm:deletion_formula_Q} to a few special cases.

\subsection{Uniform matroids} 

The following is a new recursion that allows for the computation of inverse Kazhdan--Lusztig and inverse $Z$-polynomials of uniform matroids.

\begin{proposition}\label{prop:deletion_formula_Q_uniform}
Let $\M = \U_{k,n}$ be a uniform matroid of rank $k$ on $n$ elements, where $0 < k < n$. Then,
\begin{align*}
    Q_{\U_{k,n}}(x) &= Q_{\U_{k,n-1}}(x) + (1 + x)\cdot Q_{\U_{k-1,n-1}}(x) -
    \tau(\U_{k-1,n-1})\cdot x^{k/2},\\
    Y_{\U_{k,n}}(x) &= Y_{\U_{k,n-1}}(x) + (1 + x)\cdot Y_{\U_{k-1,n-1}}(x) -
    \tau(\U_{k-1,n-1})\cdot x^{k/2}.
\end{align*}
\end{proposition}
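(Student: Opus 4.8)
The plan is to specialize Theorem~\ref{thm:deletion_formula_Q} to $\M = \U_{k,n}$, deleting an arbitrary element $i$ of the ground set. Since $0 < k < n$, no element of $\U_{k,n}$ is a coloop, so the hypotheses of Theorem~\ref{thm:deletion_formula_Q} hold for every such $i$. From the standard behaviour of uniform matroids under minors we have $\M \smallsetminus i = \U_{k,n-1}$ (using $k \le n-1$) and $\M/i = \U_{k-1,n-1}$; these account for the first two terms of each recursion.

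The crux is to identify the set $\mathscr{T}_i = \mathscr{T}_i(\U_{k,n})$ from equation~\eqref{eq:T_i}. The flats of $\U_{k,n}$ are precisely the subsets of $E$ of size at most $k-1$, together with $E$ itself. If $F$ is a flat with $i \in F$ and $\size{F}\le k-1$, then $\size{F\smallsetminus i}\le k-2<k$, so $F\smallsetminus i$ is again a flat and $F\notin\mathscr{T}_i$. If instead $F=E$, then $\size{F\smallsetminus i}=n-1$, and because $k<n$ we have $k-1<n-1$, so $F\smallsetminus i$ is neither a proper flat nor $E$ itself; hence $E\smallsetminus i\notin\mathcal{L}(\U_{k,n})$ and $E\in\mathscr{T}_i$. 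Therefore $\mathscr{T}_i=\{E\}$, and the sum in Theorem~\ref{thm:deletion_formula_Q} reduces to a single term indexed by $F=E$.

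For that term, $\rk(E)=k$, one has $\U_{k,n}|_E/i=\U_{k,n}/i=\U_{k-1,n-1}$, and $\U_{k,n}/E=\U_{0,0}$, so that $Q_{\U_{k,n}/E}(x)=Q_{\U_{0,0}}(x)=1$ and $Y_{\U_{k,n}/E}(x)=Y_{\U_{0,0}}(x)=1$. Plugging these values into the two identities of Theorem~\ref{thm:deletion_formula_Q} yields exactly the two claimed recursions. (When $k$ is odd, $\U_{k-1,n-1}$ has even rank, so $\tau(\U_{k-1,n-1})=0$ and the half-integer exponent $x^{k/2}$ is harmless.) The only step that requires any thought is pinning down $\mathscr{T}_i$, and even that is routine once the lattice of flats of a uniform matroid is written out; I do not expect a genuine obstacle here.
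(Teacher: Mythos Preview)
Your proof is correct and follows exactly the paper's approach: specialize Theorem~\ref{thm:deletion_formula_Q} to $\U_{k,n}$, observe that no element is a coloop, and verify that $\mathscr{T}_i=\{E\}$ so the sum collapses to a single term. The paper's proof is terser but makes the same identifications; you have simply spelled out the details of why $\mathscr{T}_i=\{E\}$ and what the remaining term evaluates to.
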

\begin{proof}
    Since $k < n$, none of the elements in the ground set are coloops; thus, we may apply the deletion formula for a generic element $i$ in the ground set. The set $\mathscr{T}_i$ then consists only of the top flat $E$, and the sum over $\mathscr{T}_i$ reduces exactly to the sum in our statement.
\end{proof}

The reader may compare the above identities with the following formulas for $Q_{\U_{k,n}}(x)$ from \cite[Theorem~3.3]{gao-xie}, and for $Y_{\U_{k,n}}(x)$ from \cite[Theorem~1.1]{gao-ruan-xie}:

\begin{align*}
    Q_{\U_{k,n}}(x) &= \binom{n}{k} \sum_{j=0}^{\lfloor \frac{k-1}{2}\rfloor} \frac{(n-k)(k-2j)}{(n-k+j)(n-j)}\binom{k}{j}\, x^j,\\
    Y_{\U_{k,n}}(x) &= \sum_{j=0}^{\lfloor\frac{k}{2}\rfloor} \binom{n}{i}\binom{n-i-1}{n-k}\, x^i + \sum_{j=0}^{\lfloor\frac{k-1}{2}\rfloor} \binom{n}{i}\binom{n-i-1}{n-k}\, x^{k-i}.
\end{align*}

The first of the above two formulas, along with a result due to Vecchi in \cite[Theorem~4.1]{vecchi}, implies that
    \[ \tau(\U_{k,n}) = \begin{cases} \displaystyle\binom{n}{k}\binom{k}{\lfloor \frac{k-1}{2}\rfloor} \dfrac{4(n-k)}{(2n-k-1)(2n-k+1)}& \text{if $k$ is odd,}\\
    0 & \text{if $k$ is even.}\end{cases}\]

\subsection{Glued cycles}

Let us denote by $\C_{n}$ the graphic matroid associated to a cycle of length $n$, i.e., $\C_n = \U_{n-1,n}$. Let $\mathsf{C}_{a,b}$ be the graphic matroid that results from gluing two cycles of length $a$ and $b$ along a common edge (see Figure~\ref{fig:two-cycles} for an example). Deleting this common edge results in the graphic matroid $\C_{a+b-1}$. In particular, our deletion formula reads as follows.

\begin{proposition}\label{prop:glued-cycles}
    The inverse Kazhdan--Lusztig and inverse $Z$-polynomials of the graphic matroids $\C_{a,b}$ are given by
    \begin{align*} 
    Q_{\mathsf{C}_{a,b}}(x) &= Q_{\U_{n-2,n-1}}(x)  + (1+x) Q_{\U_{a-2,a-1}}(x)\, Q_{\U_{b-2,b-1}}(x) \\
    &\qquad\qquad- \left(\tau(\U_{a-2,a-1})\, x^{\frac{a-1}{2}}\, Q_{\U_{b-2,b-1}}(x) + \tau(\U_{b-2,b-1})\, x^{\frac{b-1}{2}} \,Q_{\U_{a-2,a-1}}(x)\right),\\
    Y_{\mathsf{C}_{a,b}}(x) &= Y_{\U_{n-2,n-1}}(x)  + (1+x) Y_{\U_{a-2,a-1}}(x)\, Y_{\U_{b-2,b-1}}(x) \\
    &\qquad\qquad- \left(\tau(\U_{a-2,a-1})\, x^{\frac{a-1}{2}}\, Q_{\U_{b-2,b-1}}(x) + \tau(\U_{b-2,b-1})\, x^{\frac{b-1}{2}} \,Y_{\U_{a-2,a-1}}(x)\right),
    \end{align*}
    where $n = a+b-1$.
\end{proposition}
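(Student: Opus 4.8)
The plan is to invoke the deletion formula of \cref{thm:deletion_formula_Q} with $i$ equal to the edge shared by the two cycles (this $i$ is not a coloop, as deleting it leaves a connected graph), and then to identify each of its three ingredients. Two of them are immediate. Deleting $i$ opens each cycle into a path, and the two paths stay joined at both of their endpoints, so $\C_{a,b}\smallsetminus i$ is one cycle of length $a+b-2$: that is, $\C_{a,b}\smallsetminus i=\C_{a+b-2}=\U_{n-2,n-1}$ with $n=a+b-1$, as recorded just before the statement. Contracting $i$ identifies its two endpoints, after which the two cycles share a single vertex; the graphic matroid of a one-point union of an $(a-1)$-cycle and a $(b-1)$-cycle is their direct sum, so $\C_{a,b}/i\cong\U_{a-2,a-1}\oplus\U_{b-2,b-1}$. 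Since $P$ and $Z$ are multiplicative over direct sums and the lattice of flats of a direct sum is the product of the two lattices of flats, the same multiplicativity holds for $Q$ and $Y$ (equivalently, $\widehat Q=P^{-1}$ and $\widehat Y=Z^{-1}$ in the incidence algebra; see \cref{sec:recap-kl}); hence $Q_{\C_{a,b}/i}=Q_{\U_{a-2,a-1}}Q_{\U_{b-2,b-1}}$ and $Y_{\C_{a,b}/i}=Y_{\U_{a-2,a-1}}Y_{\U_{b-2,b-1}}$. This yields the first two summands of each claimed identity.

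The content lies in the sum over $\mathscr{T}_i(\C_{a,b})$, and here I would use the standard description of a flat of a graphic matroid as the edge set induced by a partition of the vertices into connected blocks. Writing $u,v$ for the endpoints of $i$, a flat $F$ with $i\in F$ belongs to $\mathscr{T}_i$ precisely when $F\smallsetminus i$ fails to be a flat, i.e.\ when $u$ and $v$ lie in a common block of $F\smallsetminus i$; this means $F\smallsetminus i$ contains a $u$--$v$ path, and in $\C_{a,b}$ such a path must run all the way around one of the two cycles. Therefore every $F\in\mathscr{T}_i$ contains the full edge set $E_1$ of the first cycle or the full edge set $E_2$ of the second (and $F=E$ exactly when it contains both). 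If $E_1\subseteq F\subsetneq E$ with $E_2\not\subseteq F$, the edges of $F$ outside $E_1$ lie along the second cycle and form a proper subset of its edge set, hence a forest; they are thus coloops of $\M|_F$, so $\M|_F\cong\C_a\oplus\U_{m,m}$ and $\M|_F/i\cong\U_{a-2,a-1}\oplus\U_{m,m}$, where $m=|F\smallsetminus E_1|$; symmetrically with the two cycles exchanged; and $\M|_E/i=\M/i=\U_{a-2,a-1}\oplus\U_{b-2,b-1}$.

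To collapse the sum I would use the elementary fact that $\tau(\N_1\oplus\N_2)=0$ whenever $\N_1$ and $\N_2$ both have positive rank: if $\rk(\N_1)+\rk(\N_2)$ is odd then one summand has even rank and the other odd, and the bound $\deg P_{\N_j}<\tfrac12\rk(\N_j)$ forces $\deg\big(P_{\N_1}P_{\N_2}\big)<\tfrac12\big(\rk(\N_1)+\rk(\N_2)-1\big)$, so the coefficient of $P_{\N_1\oplus\N_2}=P_{\N_1}P_{\N_2}$ defining $\tau$ is zero, while if the rank sum is even $\tau$ vanishes by definition. Applying this to the matroids found above --- with $a,b\ge3$, so $\U_{a-2,a-1}$ and $\U_{b-2,b-1}$ have positive rank --- every term of the sum over $\mathscr{T}_i$ vanishes except those indexed by $F=E_1$ and $F=E_2$ (and $F=E$ also contributes $0$, since $\M/i$ is then a direct sum of two positive-rank matroids). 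For $F=E_1$ we have $\rk(E_1)=a-1$, $\M|_{E_1}/i=\C_a/i=\C_{a-1}=\U_{a-2,a-1}$ so that $\tau(\M|_{E_1}/i)=\tau(\U_{a-2,a-1})$, and $\M/E_1=\C_{b-1}=\U_{b-2,b-1}$; this contributes $\tau(\U_{a-2,a-1})\,x^{(a-1)/2}\,Q_{\U_{b-2,b-1}}(x)$, and symmetrically $F=E_2$ contributes $\tau(\U_{b-2,b-1})\,x^{(b-1)/2}\,Q_{\U_{a-2,a-1}}(x)$. Substituting these into \cref{thm:deletion_formula_Q} yields the stated identity for $Q$, and the computation for $Y$ is word for word the same with $Y$ replacing $Q$.

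The step I expect to be the main obstacle is the combinatorial one in the middle paragraph: correctly pinning down $\mathscr{T}_i(\C_{a,b})$ from the vertex-partition picture, and verifying that after the $\tau$-vanishing lemma exactly the two ``full cycle'' flats remain --- everything else is substitution. The degenerate cases $a=2$ or $b=2$, where one ``cycle'' is a digon and the lemma no longer applies, behave differently and should be treated by hand or excluded; they play no role in the intended applications.
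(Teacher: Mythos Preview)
Your proposal is correct and follows essentially the same approach as the paper's proof: apply the deletion formula at the shared edge $i$, identify $\C_{a,b}\smallsetminus i$ and $\C_{a,b}/i$, and argue that all terms in the sum over $\mathscr{T}_i$ vanish except those for $F=E_1$ and $F=E_2$. The paper is terser---it simply asserts that the remaining flats in $\mathscr{T}_i$ have $\tau(\M|_F/i)=0$ ``due to the presence of coloops'' and cites \cite[Lemma~2.7]{braden-vysogorets} for the $F=E$ case---whereas you spell out the structure of $\mathscr{T}_i$ via the $u$--$v$ path criterion and prove the $\tau$-vanishing directly from the degree bound; your caveat about the degenerate cases $a=2$ or $b=2$ is a fair observation that the paper leaves implicit.
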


\begin{proof}
 Since the deletion formulas for $Q$ and $Y$ look the same, we indicate the proof only for $Q$, as the other follows analogously. Let us call $i$ the edge along which the cycles of size $a$ and $b$ are glued to form $\C_{a,b}$. Note that the set $\mathscr{T}_i$ in $\C_{a,b}$ consists of the full ground set $E$, the two ground sets of $\C_a = \U_{a-1,a}$ and $\C_b = \U_{b-1,b}$, and several other flats for which $\tau(\M|_F/i)$ vanishes due to the presence of coloops.
    
    Since $\C_{a,b}/i$ is isomorphic to the disconnected matroid $\U_{a-1,a} \oplus \U_{b-1, b}$, we have $\tau(\C_{a,b}/i) = 0$ by  \cite[Lemma 2.7]{braden-vysogorets}.
    As noted in the discussion prior to the statement, $\C_{a,b}\smallsetminus i = \C_{a+b-1}$, and thus $Q_{\C_{a,b}\smallsetminus i}(x) = Q_{\U_{a+b-2, a+b-1}}(x)$. By putting the pieces together and using that $Q$ and $Y$ are multiplicative under direct sums (see \cite[Lemma~3.1]{gao-xie} and \cite[Proposition~2.3]{gao-ruan-xie}), we obtain the formula of the statement.
\end{proof}

    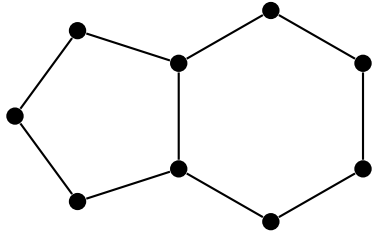
\begin{figure}[ht]
    \centering
    \begin{tikzpicture}
        [scale=0.7,auto=center,
         every node/.style={circle, fill=black, inner sep=2.3pt},
         every edge/.append style = {thick}]
        \tikzstyle{edges} = [thick];

        \node (a1) at (0,0) {};
        \node (a2) at (0,2) {};
        \node (a3) at (1.7320,3) {};
        \node (a4) at (2*1.7320,2) {};
        \node (a5) at (2*1.7320,0) {};
        \node (a6) at (2*1.7320/2,-1) {};
        \draw[edges] (a1) -- (a2);
        \draw[edges] (a2) -- (a3);
        \draw[edges] (a3) -- (a4);
        \draw[edges] (a4) -- (a5);
        \draw[edges] (a5) -- (a6);
        \draw[edges] (a6) -- (a1);

        \node (p1) at (-1.902113,  2.618034) {};
        \node (p2) at (-3.077684,  1.000000) {};
        \node (p3) at (-1.902113, -0.618034) {};
        \draw[edges] (a2) -- (p1);
        \draw[edges] (p1) -- (p2);
        \draw[edges] (p2) -- (p3);
        \draw[edges] (p3) -- (a1);

    \end{tikzpicture}
    \caption{The graphic matroid $\mathsf{C}_{5,6}$.}
    \label{fig:two-cycles}
\end{figure}

\begin{remark}
    When the numbers $a$ and $b$ are even, the $\tau$ terms appearing in the preceding proposition vanish. Therefore, the two formulas in the above proposition become simpler: 
    \begin{align*} 
    Q_{\mathsf{C}_{a,b}}(x) &= Q_{\U_{n-2,n-1}}(x)  + (1+x) Q_{\U_{a-2,a-1}}(x)\, Q_{\U_{b-2,b-1}}(x), \\
    Y_{\mathsf{C}_{a,b}}(x) &= Y_{\U_{n-2,n-1}}(x)  + (1+x) Y_{\U_{a-2,a-1}}(x)\, Y_{\U_{b-2,b-1}}(x).
    \end{align*}
\end{remark}

\subsection{Single element deletions of projective geometries}

In this section we employ Theorem~\ref{thm:definition-q-and-y} in order to compute the inverse Kazhdan--Lusztig polynomial of a projective geometry with a single element deleted.

We consider $\M = \PG(r-1,q)$, the projective geometry of rank $r$ over the finite field $\mathbb{F}_q$.  This is a modular matroid, and so its Kazhdan--Lusztig polynomial is the constant $1$ (see \cite[Proposition~2.14]{elias-proudfoot-wakefield}). An unpublished result of Vecchi \cite[Theorem~3.5]{vecchi} guarantees that the inverse Kazhdan--Lusztig polynomial of $\PG(r-1,q)$ is a constant polynomial, and in fact equals $\mu(\PG(r-1,q)) = q^{\binom{r}{2}}$.

\begin{proposition}
    Consider  the projective geometry $\M = \PG(r-1, q)$ of rank $r\geq 2$ over the finite field $\mathbb{F}_q$. Then, 
    \[ Q_{\M\smallsetminus i}(x)  =q^{\binom{r}{2}} - q^{\binom{r-1}{2}} + \left(\frac{q^{r-1}-1}{q-1} q^{\binom{r-2}{2}} - q^{\binom{r-1}{2}}\right) \cdot x.\]
    
\end{proposition}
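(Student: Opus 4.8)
The idea is to apply the deletion formula for $Q$ from Theorem~\ref{thm:deletion_formula_Q} with $\M = \PG(r-1,q)$ and $i$ an arbitrary element of the ground set. Since $r \geq 2$, the element $i$ is not a coloop, so the formula applies. I would compute each of the three terms on the right-hand side: $Q_{\M\smallsetminus i}(x)$ is the unknown, so I rearrange the identity to isolate it in terms of $Q_{\M}(x)$, $Q_{\M/i}(x)$, and the $\mathscr{T}_i$-sum.

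First, the two ``input'' polynomials. By the result of Vecchi cited just above the statement, $Q_{\PG(r-1,q)}(x) = q^{\binom{r}{2}}$, a constant. The contraction $\M/i = \PG(r-1,q)/i$ is isomorphic to $\PG(r-2,q)$ (contracting a point in a projective geometry drops the rank by one and yields the projective geometry of one lower rank over the same field), so $Q_{\M/i}(x) = q^{\binom{r-1}{2}}$, again a constant. Thus the first two terms contribute $q^{\binom{r}{2}} + (1+x)\,q^{\binom{r-1}{2}}$ to $Q_{\M}(x) = Q_{\M\smallsetminus i}(x) + (1+x)Q_{\M/i}(x) - (\text{sum})$, i.e. after rearranging, $Q_{\M\smallsetminus i}(x) = q^{\binom{r}{2}} - (1+x)\,q^{\binom{r-1}{2}} + \sum_{F\in\mathscr{T}_i}\tau(\M|_F/i)\,x^{\rk(F)/2}\,Q_{\M/F}(x)$.

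Next, the $\mathscr{T}_i$-sum. Here $\mathscr{T}_i = \{F\in\mathcal{L}(\M): i\in F,\ F\smallsetminus i\notin\mathcal{L}(\M)\}$. For each such $F$, the factor $\tau(\M|_F/i)$ is nonzero only when $\rk(\M|_F/i)$ is odd, i.e. $\rk(F)$ is even, and it equals the middle coefficient of the KL polynomial of $\M|_F/i$. Now $\M|_F$ is itself a projective geometry $\PG(\rk(F)-1,q)$ (a flat of a projective geometry is a projective subspace), and $\M|_F/i \cong \PG(\rk(F)-2,q)$, whose KL polynomial is the constant $1$ since projective geometries are modular. Therefore $\tau(\M|_F/i) = [x^{(\rk(F)-2)/2}]\,P_{\PG(\rk(F)-2,q)}(x)$, which is nonzero only when $\rk(F)-2 = 0$, i.e. $\rk(F) = 2$, in which case it equals $1$ (the constant term of $P = 1$). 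So the only surviving terms in the sum are the rank-$2$ flats $F$ containing $i$, i.e. the lines through the point $i$; there are $\frac{q^{r-1}-1}{q-1}$ of them. For each such line $F$, we get $\tau(\M|_F/i)\,x^{\rk(F)/2}\,Q_{\M/F}(x) = 1\cdot x^{1}\cdot Q_{\PG(r-3,q)}(x) = x\cdot q^{\binom{r-2}{2}}$. Summing and substituting into the rearranged identity gives exactly the claimed formula.

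\textbf{Main obstacle.} The computation is essentially bookkeeping once one has the two facts (i) $Q_{\PG}$ is the constant $q^{\binom{r}{2}}$ and (ii) $P_{\PG}$ is constant $1$ — both quoted in the excerpt — together with the identifications $\M/i\cong\PG(r-2,q)$, $\M|_F\cong\PG(\rk F-1,q)$, $\M|_F/i\cong\PG(\rk F-2,q)$, and $\M/F\cong\PG(r-\rk F-1,q)$. The one point requiring genuine care is verifying that $\mathscr{T}_i$ contributes nothing beyond the lines through $i$: one must check that for $\rk(F)\geq 4$ even, $\tau(\M|_F/i)=0$ because $P_{\PG(\rk F-2,q)}=1$ has no coefficient in positive degree, and that for $\rk(F)$ odd the $\tau$ factor vanishes by parity — so the apparent dependence of the sum on all of $\mathscr{T}_i$ collapses. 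A minor subtlety is the edge case $r=2$: then $\PG(0,q)$ is a single point, $\M\smallsetminus i$ has rank... one must confirm the formula still reads correctly (it gives $q^{1}-q^{0}+(\tfrac{q-1}{q-1}q^{\binom{0}{2}}-q^{0})x = q-1+0\cdot x$, consistent with $Q$ of a rank-$2$ uniform matroid minus a point), but this follows from the general argument with no extra work.
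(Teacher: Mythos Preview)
Your proposal is correct and follows essentially the same route as the paper's own proof: apply the deletion formula of Theorem~\ref{thm:deletion_formula_Q} to $\M=\PG(r-1,q)$, use $Q_{\PG(s,q)}=q^{\binom{s+1}{2}}$ and $P_{\PG(s,q)}=1$ to evaluate the non-sum terms, and observe that since $\M|_F/i\cong\PG(\rk(F)-2,q)$ has $P=1$, the factor $\tau(\M|_F/i)$ survives only for $\rk(F)=2$, reducing the $\mathscr{T}_i$-sum to $\frac{q^{r-1}-1}{q-1}\cdot x\cdot q^{\binom{r-2}{2}}$. One small point you leave implicit (and which the paper spells out) is that every rank-$2$ flat $F\ni i$ actually lies in $\mathscr{T}_i$: since $|F|=q+1\ge 3$, the set $F\smallsetminus i$ has at least two points and is therefore not a flat of $\PG(r-1,q)$.
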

\begin{proof}
    We start from the formula for $Q_{\M}(x)$ in Theorem \ref{thm:deletion_formula_Q} and substitute $\M = \PG(r-1, q)$.
    The lattice of flats of a projective geometry is modular.

    The sum in Theorem \ref{thm:deletion_formula_Q} is over the set
    $\mathscr{T}_i = \{G \in \cL(\M) : i \in G \text{ and } G\smallsetminus i \notin \mathcal{L}(\M)\}$.
    For the projective geometry $\M$, a flat $G$ containing $i$ is a linear subspace. If $\rank(G) \geq 2$, removing the point $i$ results in a set $G\smallsetminus i$ that is not a subspace, and thus not a flat. If $\rank(G)=1$, then $G=\{i\}$ and $G\smallsetminus i = \varnothing$, which is a flat. Thus, for $\M=\PG(r-1,q)$, the set $\mathscr{T}_i$ consists of all flats containing $i$ of rank $2$ or greater.

    For each $G \in \mathscr{T}_i$, the term in the sum is $\tau(\M|_G/i)\cdot x^{\rk(\M|_G/i)/2} \cdot Q_{\M/G}(x)$.
    The matroid $\M|_G/i$ has lattice of flats isomorphic to the interval $[i, G]$ in $\cL(\M)$, and thus isomorphic to $\PG(k-2,q)$ where $k=\rk(G)$.
    The rank of $\M|_G/i$ is $k-1$ and  so $\tau(\M|_G/i)$ is non-zero only if the rank $k-1=1$, which implies $k=2$.

    Thus, the sum is over the set of rank 2 flats containing the point $i$.
    The number of such flats is the number of 2-dimensional subspaces of $V(r,q)$ containing a given 1-dimensional subspace, which is equal to the number of $1$-dimensional subspaces in the quotient space $V(r,q)/\langle i \rangle \cong V(r-1,q)$. This number is $\qbinom{r-1}{1}=\frac{q^{r-1}-1}{q-1}$, the number of lines through the point $i$ in $\M$. For each such rank 2 flat $G$:
    \begin{itemize}
        \item $\tau(\M|_G/i) = \tau(\PG(0,q)) = 1$, since $\PG(0,q)$ has rank 1.
        \item $Q_{\M/G}(x) = Q_{\PG(r-3,q)}(x)$, since the contraction $\M/G$ of $\M=\PG(r-1,q)$ by a rank~2 flat $G$ is isomorphic to $\PG(r-3,q)$.
    \end{itemize}
    The entire sum therefore reduces to
    \[
        \sum_{G \in \mathscr{T}_i, \rk(G)=2} \tau(\M|_G/i)\cdot x^{\rk(G)/2} \cdot Q_{\M/G}(x) = \qbinom{r-1}{1} \cdot x \cdot Q_{\PG(r-3,q)}(x).
    \]
    Substituting this back into the general formula from Theorem \ref{thm:deletion_formula_Q}, and using the fact that the contraction $\M/i$ is isomorphic to $\PG(r-2,q)$, we get
    \[
        Q_{\PG(r-1,q)}(x) = Q_{\M\smallsetminus i}(x) + (1 + x)\cdot Q_{\PG(r-2,q)}(x) - \qbinom{r-1}{1} \cdot x \cdot Q_{\PG(r-3,q)}(x).
    \]
    Substituting $Q_{\PG(r-1,q)}(x) = q^{\binom{r}{2}}$ (and correspondingly for $Q_{\PG(r-2,q)}(x)$ and $Q_{\PG(r-3,q)}(x)$), we are led to the desired identity after rearranging some terms.
\end{proof}

\section{A counterexample to Conjecture~\ref{conj:xie-zhang}}\label{sec:counterexample}

Let $\M$ be a matroid of rank $2$ without loops. The lattice of flats of $\M$ has the empty set as its bottom element and $E$ as its top element. This guarantees that the rank $1$ flats of $\M$ form a partition of $E$. Conversely, any partition of $E$ yields a loopless rank $2$ matroid on $E$. 

By taking duals, the above construction yields a procedure that builds a corank $2$ matroid on $E$ without coloops from every set partition of $E$.

\begin{example}
    Consider $n=8$ and the partition of the set $\{1,\ldots,8\}$ consisting of the parts $\{1,5,6,8\}$, $\{2\}$, and $\{3,4,7\}$. The loopless rank $2$ matroid that has these three sets as its rank $1$ flats is the graphic matroid induced by the graph depicted in Figure~\ref{fig:graph}.  (If there are more than three parts in the partition, these matroids fail to be graphic.) The corank $2$ matroid associated to this set partition is the corresponding matroid dual.
    \begin{figure}[ht] 
		\begin{tikzpicture} 
		[scale=3.0,auto=center,every node/.style={circle, fill=black, inner sep=2.3pt}] 
		\tikzstyle{edges} = [black, thick];
		
		\node (a1) at (-0.5,0.86) {};  
		\node (a3) at (0,1.73)  {}; 
		\node (a2) at (0.5,0.86)  {};    
		
		\draw[edges] (a1) -- (a2) ;
		\draw[edges] (a3) edge[bend right=20] (a1) ;
		\draw[edges] (a3) edge[bend right=-20] (a1);
		\draw[edges] (a3) edge[bend right=8] (a1) ;
		\draw[edges] (a3) edge[bend right=-8] (a1);
		\draw[edges] (a3) edge[bend right=15] (a2);
		\draw[edges] (a3) edge[bend right=-15] (a2);
		\draw[edges] (a3) -- (a2);
		\end{tikzpicture}
		\caption{The edges on the left are labelled by $\{1,5,6,8\}$, the edges on the right by $\{3,4,7\}$ and the bottom one by $\{2\}$.}\label{fig:graph}
	\end{figure}
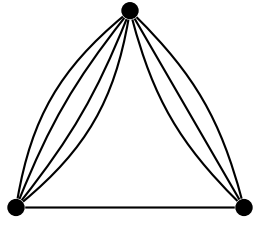
\end{example}

Notice that rank $2$ matroids without loops are trivially paving, and therefore, corank $2$ matroids without coloops are copaving.  This fact is relevant because the class of elementary split matroids considered by Ferroni and Schr\"oter in \cite{ferroni-schroter} contains all copaving matroids.

Furthermore, since Ardila and Sanchez \cite[Theorem~8.4]{ardila-sanchez} showed that the inverse Kazhdan--Lusztig polynomial is a valuative invariant, we may apply \cite[Theorem~1.4]{ferroni-schroter} to compute $Q_{\M}(x)$ and $Y_{\M}(x)$ for any corank $2$ matroid without coloops. In order to state our next result, let us recall that in \cite{ferroni-schroter} a subset $A\subseteq E$ is said to be \emph{stressed} if both $\M/A$ and $\M|_A$ are uniform matroids.

As the following result shows (once combined with Proposition~\ref{prop:glued-cycles}), our formulas ultimately only depend on the values of $Q_{\M}(x)$ and $Y_{\M}(x)$ for uniform matroids of corank $1$ and $2$.

\begin{theorem}\label{thm:crank2}
     Let $\M$ be a corank $2$ matroid without coloops. For each $r$, let $\lambda_r$ denote the number of stressed subsets of rank $r$ and size $r+1$ in $\M$. Then
    \begin{align*}
        Q_{\M}(x) & = Q_{\U_{n-2,n}}(x) - \sum_{r} \lambda_{r} \sum_{a=2}^{n-r-1} \left(Q_{\C_{a,n+1-a}}(x) - Q_{\U_{a-1,a}}(x)\, Q_{\U_{n-a-1,n-a}}(x)\right),\\
        Y_{\M}(x) &= Y_{\U_{n-2,n}}(x) - \sum_{r} \lambda_{r} \sum_{a=2}^{n-r-1} \left(Y_{\C_{a,n+1-a}}(x) - Y_{\U_{a-1,a}}(x)\, Y_{\U_{n-a-1,n-a}}(x)\right).
    \end{align*}
    In particular, if $\M$ arises from a partition $\lambda$ of $[n]$, then
    \begin{align*}
        Q_{\M}(x) & = Q_{\U_{n-2,n}}(x) - \sum_{\substack{S\in \lambda\\ |S|\geq 2}} \sum_{a=2}^{|S|} \left(Q_{\C_{a,n+1-a}}(x) - Q_{\U_{a-1,a}}(x)\, Q_{\U_{n-a-1,n-a}}(x)\right),\\
        Y_{\M}(x) &= Y_{\U_{n-2,n}}(x) - \sum_{\substack{S\in \lambda\\ |S|\geq 2}}\sum_{a=2}^{|S|} \left(Y_{\C_{a,n+1-a}}(x) - Y_{\U_{a-1,a}}(x)\, Y_{\U_{n-a-1,n-a}}(x)\right).
    \end{align*}
\end{theorem}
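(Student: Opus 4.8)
The plan is to reduce the computation of $Q_{\M}(x)$ and $Y_{\M}(x)$ to the glued-cycle formula of Proposition~\ref{prop:glued-cycles} by exploiting valuativity. Since $Q$ and $Y$ are only defined on loopless matroids, $\M$ is loopless; being corank $2$ without coloops it is copaving, hence an elementary split matroid in the sense of Ferroni--Schr\"oter, and (dualizing) it arises from a partition $\lambda$ of $[n]$ whose parts are the rank $1$ flats of $\M^{*}$. The inverse Kazhdan--Lusztig polynomial is valuative by \cite[Theorem~8.4]{ardila-sanchez}, and the inverse $Z$-polynomial is valuative as well, being built from $Q$ and the M\"obius invariant by the convolution in Theorem~\ref{thm:definition-q-and-y} (cf.\ \cite{gao-ruan-xie}). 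I would therefore apply \cite[Theorem~1.4]{ferroni-schroter} to $f\in\{Q,Y\}$; in the corank $2$ case it should take the form
\[
    f(\M) \;=\; f(\U_{n-2,n}) \;+\; \sum_{S \in \lambda} c_{|S|},
\]
where $c_m = c_m(f,n)$ is the contribution of a single block of size $m$, i.e.\ $c_m = f(\Lambda_m) - f(\U_{n-2,n})$ for the cuspidal corank $2$ matroid $\Lambda_m$ whose dual partition is one block of size $m$ plus $n-m$ singletons. This rests on a dictionary I would check by hand: the stressed subsets of $\M$ of rank $r$ and size $r+1$ are precisely the complements of the parts of $\lambda$ of size $n-r-1$, so the count $\lambda_r$ of the former equals the count of the latter; singleton parts correspond to the trivial case $\Lambda_1 = \U_{n-2,n}$, so $c_1 = 0$, which is why only parts of size $\ge 2$ survive.

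Next I would pin down the numbers $c_m$ by inserting glued cycles into the same formula. The glued cycle $\C_{a,n+1-a}$ is a loopless corank $2$ matroid whose dual partition has blocks of sizes $a-1$, $n-a$, and $1$, while $\C_a \oplus \C_{n-a} = \U_{a-1,a} \oplus \U_{n-a-1,n-a}$ corresponds to a partition with blocks of sizes $a$ and $n-a$. Applying the displayed identity to each, and using that $Q$ and $Y$ are multiplicative under direct sums (see \cite[Lemma~3.1]{gao-xie} and \cite[Proposition~2.3]{gao-ruan-xie}), one obtains
\[
    f(\C_{a,n+1-a}) - f(\U_{a-1,a})\,f(\U_{n-a-1,n-a}) \;=\; c_{a-1} - c_a .
\]
Summing over $a = 2,\dots,m$ telescopes, with $c_1 = 0$, to $c_m = -\sum_{a=2}^{m}\bigl(f(\C_{a,n+1-a}) - f(\U_{a-1,a})f(\U_{n-a-1,n-a})\bigr)$. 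Plugging this back into the first display gives the ``in particular'' identity for partitions, and grouping the parts of $\lambda$ by size (so that $\sum_{S}$ becomes $\sum_r \lambda_r$) gives the general identity, simultaneously for $f=Q$ and $f=Y$; Proposition~\ref{prop:glued-cycles} then makes the right-hand sides completely explicit.

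I expect the crux to be the invocation of \cite[Theorem~1.4]{ferroni-schroter}: one must extract its exact corank $2$ shape, which amounts to checking that the Ferroni--Schr\"oter correction of an elementary split matroid is \emph{additive} over its stressed subsets (so distinct parts of $\lambda$ contribute independently and the sum over $S$ above is legitimate), and that the cuspidal matroid attached to a stressed subset of rank $r$ and size $r+1$ is exactly $\Lambda_{n-r-1}$, with $c_1 = 0$. A more routine point is the boundary value $a = 2$ in the telescoping: Proposition~\ref{prop:glued-cycles} is established under the hypothesis $a, b \ge 3$, but $\C_{2,n-1}$ is itself the cuspidal matroid $\Lambda_{n-2}$ (an $(n-1)$-cycle with one edge doubled), so the telescoping relation at $a=2$ holds by the very definition of $c_{n-2}$ and needs no appeal to that proposition.
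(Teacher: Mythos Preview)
Your approach is correct and lands on the same formula as the paper, but the route differs in one place. Both proofs start from the Ferroni--Schr\"oter theorem for valuative invariants of elementary split matroids; the paper cites it as \cite[Theorem~5.3]{ferroni-schroter}, already in the ``stressed-subset / cuspidal matroid $\LL_{r,k,h,n}$'' form, observes that in corank $2$ only $h=r+1$ survives, and then invokes a second black box, \cite[Corollary~5.7]{ferroni-schroter}, to expand $f(\LL_{r,n-2,r+1,n})$ as an alternating sum of $f(\C_{a,n+1-a})$'s and products $f(\U_{a-1,a})f(\U_{n-a-1,n-a})$. You instead recast the correction additively over partition blocks as $c_m=f(\Lambda_m)-f(\U_{n-2,n})$ and then \emph{compute} $c_m$ by applying the same Ferroni--Schr\"oter identity to $\C_{a,n+1-a}$ (dual partition $\{a-1,n-a,1\}$) and to $\U_{a-1,a}\oplus\U_{n-a-1,n-a}$ (dual partition $\{a,n-a\}$), obtaining the telescoping relation $c_{a-1}-c_a$. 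This is a clean, self-contained substitute for \cite[Corollary~5.7]{ferroni-schroter}; the price is that you must check the Ferroni--Schr\"oter formula applies to the disconnected matroid $\U_{a-1,a}\oplus\U_{n-a-1,n-a}$ and verify the dual partitions by hand. The ``crux'' you flag is mild: once one unpacks \cite[Theorem~5.3]{ferroni-schroter} and notes (as you do) that in corank $2$ the stressed subsets of type $(r,r+1)$ are exactly the complements of the partition blocks of size $n-r-1$, your per-block additivity is immediate. Two small remarks: the paper's Proposition~\ref{prop:glued-cycles} carries no hypothesis $a,b\ge 3$, so your boundary worry at $a=2$ is unnecessary (and in any case that proposition is not used in the proof of the theorem, only afterwards to make the formula explicit); and your observation that $Y$ is valuative via its convolution definition is worth keeping, since the paper's written proof treats only $Q$ and leaves the $Y$ case implicit.
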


\begin{proof}
    By a direct application of \cite[Theorem~5.3]{ferroni-schroter}, we obtain that for any coloopless corank $2$ matroid on $n$ elements, the following equality holds:
    \[ Q_{\M}(x) = Q_{\U_{n-2,n}}(x) - \sum_{r,h} \lambda_{r,h} \left( Q_{\LL_{r,n-2,h,n}}(x) - Q_{\U_{n-2-r,n-h}}(x) Q_{\U_{r,h}}(x)\right),\]
    where $\lambda_{r,h}$ is the number of stressed subsets of $\M$ that have rank $r$ and size $h$. (We omit the definition of the matroid $\LL_{r,n-2,h,n}$ because it is inessential for this proof, as we will see in what follows.)
    
    Since $\M$ has corank $2$, the definition of being a stressed subset implies that $h\in \{r,r+1,r+2\}$, and in fact the cases $h=r$ and $h=r+2$ cannot happen. Thus,
    \[ Q_{\M}(x) = Q_{\U_{n-2,n}}(x) - \sum_{r} \lambda_{r} \left( Q_{\LL_{r,n-2,r+1,n}}(x) - Q_{\U_{n-r-2,n-r-1}}(x) Q_{\U_{r,r+1}}(x)\right),\]
    where now $\lambda_r$ counts the number of stressed subsets of rank $r$ and size $r+1$.   By \cite[Corollary~5.7]{ferroni-schroter}, since the inverse Kazhdan--Lusztig polynomial is a valuative invariant, we obtain
    \[ Q_{\LL_{r,n-2,r+1,n}}(x) = \sum_{a=2}^{n-r-1} Q_{\C_{a,n+1-a}}(x) - \sum_{a=2}^{n-r-2} Q_{\U_{a-1,a}}(x)\, Q_{\U_{n-a-1,n-a}}(x),\]
    which yields
    \[ Q_{\LL_{r,n-2,r+1,n}}(x)- Q_{\U_{n-r-2,n-r-1}}(x) Q_{\U_{r,r+1}}(x) = \sum_{a=2}^{n-r-1} \left(Q_{\C_{a,n+1-a}}(x) - Q_{\U_{a-1,a}}(x)\, Q_{\U_{n-a-1,n-a}}(x)\right).\]
    Putting all the pieces together, we obtain
    \[ Q_{\M}(x) = Q_{\U_{n-2,n}}(x) - \sum_{r} \lambda_{r} \sum_{a=2}^{n-r-1} \left(Q_{\C_{a,n+1-a}}(x) - Q_{\U_{a-1,a}}(x)\, Q_{\U_{n-a-1,n-a}}(x)\right),\]
    as desired. 
\end{proof}

With this formula at hand, it is straightforward to compute $Q_{\M}(x)$ and $Y_{\M}(x)$ for very large matroids $\M$ of corank $2$. For all such matroids, up to cardinality $20$, Conjecture~\ref{conj:xie-zhang} is true. However, starting at $21$ elements counterexamples appear. The following result implies Theorem~\ref{thm:counterexample-main}.

\begin{theorem}\label{thm:counterexample-body}
    The corank $2$ matroid induced by partitioning the set $\{1,\ldots,21\}$ into six parts of sizes $(4,4,4,3,3,3)$ is a counterexample to Conjecture~\ref{conj:xie-zhang}.
\end{theorem}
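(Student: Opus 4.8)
The plan is to turn $Q_{\M}(x)$ into a completely explicit polynomial with integer coefficients, form its normalization $\mathcal{B}(Q_{\M})(x)$, and then certify by exact arithmetic that this normalization has a pair of non-real roots.

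\textbf{Step 1: an explicit expression for $Q_{\M}(x)$.} By the discussion at the start of Section~\ref{sec:counterexample}, $\M$ is a coloopless corank~$2$ matroid on $n=21$ elements arising from the partition $\lambda=(4,4,4,3,3,3)$, so the ``in particular'' part of Theorem~\ref{thm:crank2} applies verbatim. Writing $T_a(x) := Q_{\C_{a,22-a}}(x) - Q_{\U_{a-1,a}}(x)\,Q_{\U_{20-a,21-a}}(x)$ and collecting the pairs $(S,a)$ with $S\in\lambda$, $\size{S}\geq 2$, $2\leq a\leq\size{S}$ according to the value of $a$ (each of the six parts contributes $a=2$ and $a=3$, and only the three parts of size $4$ contribute $a=4$), Theorem~\ref{thm:crank2} gives
\[
Q_{\M}(x) = Q_{\U_{19,21}}(x) - 6\,T_2(x) - 6\,T_3(x) - 3\,T_4(x).
\]
Each $Q_{\C_{a,22-a}}(x)$ is then expanded via Proposition~\ref{prop:glued-cycles} into a combination of inverse Kazhdan--Lusztig polynomials of uniform matroids of corank $1$ and $2$ together with the scalars $\tau(\U_{a-2,a-1})$ and $\tau(\U_{20-a,21-a})$ (one of which vanishes when $a$ is even). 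Substituting finally the Gao--Xie closed form for $Q_{\U_{k,n}}(x)$ recalled after Proposition~\ref{prop:deletion_formula_Q_uniform}, along with the displayed formula there for $\tau(\U_{k,n})$, presents $Q_{\M}(x)$ as an explicit element of $\Z[x]$. This step is a finite, mechanical substitution with no conceptual content.

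\textbf{Step 2: the normalization, and why coefficients alone do not suffice.} Since $\rk(\M)=19$ we have $\deg Q_{\M}\leq 9$; one reads off from Step~1 that the coefficient of $x^9$ is nonzero, so $s:=\deg Q_{\M}=9$ and $\mathcal{B}(Q_{\M})(x)=\sum_{j=0}^{9}\binom{9}{j}q_j x^j$ where $Q_{\M}(x)=\sum_{j=0}^{9}q_j x^j$. Moreover the $q_j$ turn out to be log-concave with no internal zeros (consistently with Conjecture~\ref{conj:four-conjectures}\eqref{it:conj-q}), and since $\binom{9}{j}^2>\binom{9}{j-1}\binom{9}{j+1}$ for $1\leq j\leq 8$, the coefficients of $\mathcal{B}(Q_{\M})(x)$ are then automatically log-concave. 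Hence no coefficient inequality (log-concavity, ultra-log-concavity, or Newton's inequalities applied to the coefficient sequence) can witness the failure of real-rootedness: a genuine root count is unavoidable.

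\textbf{Step 3: certifying a non-real root.} We apply Sturm's theorem to $p(x):=\mathcal{B}(Q_{\M})(x)\in\Z[x]$. First compute $\gcd(p,p')$ to confirm that $p$ is squarefree; then form the Sturm chain $p=p_0,\ p_1=p',\ p_2,\dots$ by signed remainders and count the difference between the numbers of sign changes among the leading coefficients at $-\infty$ and at $+\infty$. This difference equals the number of distinct real roots of $p$; carrying out the exact Euclidean-type computation, it comes out strictly smaller than $9$ (one expects exactly seven real roots and a single complex-conjugate pair), which proves that $\mathcal{B}(Q_{\M})(x)$ is not real-rooted and therefore disproves Conjecture~\ref{conj:xie-zhang} (and hence establishes Theorem~\ref{thm:counterexample-main}). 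Equivalently, since $p$ has real coefficients the sign of $\operatorname{disc}(p)$ equals $(-1)^{k}$ with $k$ the number of complex-conjugate root-pairs, so it suffices to exhibit $\operatorname{disc}(p)<0$ by an exact subresultant computation; or, invoking Rolle's theorem, it is enough to find one derivative $p^{(m)}$ with negative discriminant.

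\textbf{The main obstacle} is entirely one of bookkeeping and certification rather than of mathematics. The right-hand side of the identity in Step~1 unwinds into a large number of uniform-matroid terms of several distinct ranks, and a single arithmetic slip would invalidate the conclusion; this is why all manipulations are carried out in exact rational arithmetic and the final non-real-rootedness is certified by an exact method (a Sturm chain, or an exact discriminant-sign computation) rather than by numerically approximating the roots. Every ingredient needed is already available: Theorem~\ref{thm:crank2}, Proposition~\ref{prop:glued-cycles}, the closed forms for $Q_{\U_{k,n}}$ and $\tau(\U_{k,n})$, and the valuativity of $Q_{\M}$ underlying Theorem~\ref{thm:crank2}.
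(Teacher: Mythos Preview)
Your approach is essentially the same as the paper's: both apply Theorem~\ref{thm:crank2} (together with Proposition~\ref{prop:glued-cycles} and the closed forms for $Q_{\U_{k,n}}$ and $\tau(\U_{k,n})$) to produce $Q_{\M}(x)$ explicitly, form the normalization, and then verify computationally that it is not real-rooted. The paper simply displays the resulting polynomials and invokes SageMath to locate the complex pair near $-1.0297\pm 0.1097\,i$, whereas you propose certifying this by an exact Sturm chain or discriminant sign, which is a more rigorous way of phrasing the same computational verification.
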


\begin{proof}
    Computing the inverse Kazhdan--Lusztig polynomial via the preceding theorem results in
    \begin{multline*} Q_{\M}(x) = 71162 \, x^{9} + 232662 \, x^{8} + 350404 \, x^{7} + 323646 \, x^{6}\\ + 215169 \, x^{5} + 106659 \, x^{4} + 39217 \, x^{3} + 10323 \, x^{2} + 1790 \, x + 163.
    \end{multline*}
    After normalizing, we obtain
    \begin{multline*} \mathcal{B}(Q_{\M})(x)= 71162 \, x^{9} + 2093958 \, x^{8} + 12614544 \, x^{7} + 27186264 \, x^{6}\\ + 27111294 \, x^{5} + 13439034 \, x^{4} + 3294228 \, x^{3} + 371628 \, x^{2} + 16110 \, x + 163.
    \end{multline*}
    According to SageMath, this polynomial is not real-rooted. It possesses two complex conjugate zeros near $-1.0297\pm 0.1097\,i$. 
\end{proof}

\begin{remark}
    We were unable to find counterexamples for the logarithmic concavity of $Q_{\M}(x)$ and $Y_{\M}(x)$ using matroids of the form described above. We have verified with the help of a computer that Conjecture~\ref{conj:four-conjectures} \eqref{it:conj-q} and \eqref{it:conj-y} hold for all corank $2$ matroids of cardinality less than or equal to $35$.
\end{remark}

\bibliographystyle{amsalpha}
\bibliography{references}

\end{document}